\documentclass[11pt]{amsart}
\usepackage{amsmath}
\usepackage{amsmath,amssymb,amsthm}
\usepackage{amsfonts}
\usepackage{amssymb}
\usepackage{amsmath}
\usepackage{mathrsfs}
\usepackage{amsmath,amssymb}
\usepackage{amsmath}
\usepackage{graphicx}
\usepackage{amsmath,amssymb,amsthm}
\newcommand{\co}{\mathbb{C}}

\newcommand{\D}{{D}}
\newcommand{\Dn}{{D}^n}
\newcommand{\C}{{C}}
\newcommand{\Tb}{\overline{T}}
\newcommand{\Sb}{\overline{S}}
\newcommand{\zetab}{\bar{\zeta}}
\newcommand{\etab}{\bar{\eta}}
\newcommand{\bb}{\bar{b}}
\newcommand{\ab}{\bar{a}}
\newcommand{\zb}{\bar z}
\newcommand{\wb}{\bar w}
\newcommand{\1}{\mathbf{1}}
\newcommand{\Int}{\mbox{Int}}

\newcommand{\p}{\partial}
\newcommand{\pb}{\bar{\partial}}

\newtheorem{thm}{Theorem}[section]
\newtheorem{prop}[thm]{Proposition}
\newtheorem{lem}[thm]{Lemma}
\newtheorem{rem}[thm]{Remark}
\newtheorem{cor}[thm]{Corollary}

\topmargin -10mm \textwidth 155mm \textheight 240mm
\itemindent 10pt \oddsidemargin 0cm \evensidemargin 0cm

\begin{document}

\title[High-order Green Operator]{
High-order Green Operators on the Disk and the Polydisc }
\author[ Y. Liu, Z.H. Chen and Y.F. Pan]{ Yang Liu, Zhihua Chen and Yifei Pan}

\address{Department of Mathematics, Zhejiang Normal University, Jinhua 321004, China}
\email{liuyang4740@gmail.com}

\address{Department of Mathematics, Tongji University, Shanghai 200092, China}
\email{zzzhhc@tongji.edu.cn}

\address{Department of Mathematical Sciences, Indiana University-Purdue University Fort Wayne, Fort Wayne, Indiana 46805, USA.}
\email{pan@ipfw.edu}
\thanks{This work was supported by the National Natural Science Foundations of China (No. 11171255, 11101373), Doctoral Program Foundation of the Ministry of
Education of China (No. 20090072110053), and Zhejiang Innovation Project (No. T200905).}

\subjclass[2000]{32W50}

\keywords{Green operator, Disk, Polydisc, linear partial differential equation}

\dedicatory{}

\begin{abstract}
 In this paper, we give the explicit expressions of high-order Green operators on the disk and the polydisc, and hence the kernel functions of high-order Green operators are also presented. As applications, we present the explicit integral expressions of all the solutions for linear high-order partial differential equations in the disk.
\end{abstract}

\maketitle
\section{ Preliminaries}

Singular integral is an important tool in harmonic analysis, complex analysis and Clifford analysis, et. al. There are many types of singular integrals investigated by numerous mathematicians. In \cite{qi1}, Calder\'on-Zygmund type singular integrals have been studied in theory and applications. For the singular integral operator, in \cite{qi4}, a boundedness criterion for the Cauchy singular integral operator in weighted grand Lebesgue spaces have been given. Moreover,
semi-Fredholm properties of certain singular integral operators, asymptotic invertibility of Toeplitz operators, weighted uniform convergence of the quadrature method for Cauchy singular integral equations, and Toeplitz and singular integral operators on general Carleson Jordan curves have been discussed in \cite{qi2}. The book
\cite{qi3} has been devoted to the Fredholm theory of singular integral operators with shifts on $L^p(\Gamma), ~(1<p<\infty)$, where $\Gamma$ is a Lyapunov curve in the complex plane which is homeomorphic to a circle or a segment.

In one complex dimension, the inverse operator (or Green operator) of the Cauchy-Riemann operator $\pb$ plays an important role in finding a solution of $\pb$ equation, see \cite{nw,nn,p}. Especially, high-order Green operators are used in \cite{p} to prove the general existence theorem for nonlinear partial differential systems of any order in one
complex variable. In this paper, we will give the explicit expression of high-order Green operators on the disk and the polydisc, and the kernel functions of high-order Green operators will be presented as well.

Let $\D$ be the closed disk $\{z\in\co||z|\leq R\}$ and $\C$ be its boundary $\{z\in\co||z|=R\}$. Unless otherwise state, in this paper, functions we consider will be complex valued and integrable with $D$.
$C^\alpha(\D)$ denotes the set of all functions $f$ on $\D$, where
$$H_\alpha[f]=\sup\{\frac{|f(z)-f(z')|}{|z-z'|^\alpha}\big|z,z'\in \D\}$$
is finite. For $f\in C^\alpha(\D)$ we define
$$||f||=|f|+(2R)^\alpha H_\alpha[f],$$ where $|f|$ denotes $\sup_{z\in D}|f(z)|$.
$C^k(\D)$ is the set of all functions $f$ on $\D$ whose $k$th order partial derivatives exist and are continuous,
$k\geq 0$ is an integer. $C^{k+\alpha}(\D)$ is the set of all functions $f$ on $\D$ whose $k$th order partial derivatives exist
 and belong to $C^\alpha(\D)$. For $f\in C^{k+\alpha}(\D)$, we have the definition in terms of $||\cdot||$:
 $$||f||^{(k)}=\max\limits_{i+j=k}\{||\partial^i\pb^jf||\}.$$
It should be pointed out that the function $||\cdot||^{(k)}$ on $C^{k+\alpha}(\D)$ is a semi-norm rather than a norm since $||f||^{(k)}=0$ if and only if $f$ is polynomial of degree of $k-1$.
The following operators are defined on $\D$ as in \cite{nw}:
\begin{equation} \label{eq:1}
\begin{split}
Tf(z)&=\frac{-1}{2\pi i}\int_\D\frac{f(\zeta)d\zetab \wedge d\zeta}{\zeta-z},~
\Tb f(z)=\frac{-1}{2\pi i}\int_\D\frac{f(\zeta)d\zetab \wedge d\zeta}{\zetab-\zb},\\
^2Tf(z)&=\frac{-1}{2\pi i}\int_\D\frac{f(\zeta)-f(z)}{(\zeta-z)^2}d\zetab \wedge d\zeta,~
^2\Tb f(z)=\frac{-1}{2\pi i}\int_\D\frac{f(\zeta)-f(z)}{(\zetab-\zb)^2}d\zetab \wedge d\zeta,\\
Sf(z)&=\frac{1}{2\pi i}\int_\C\frac{f(\zeta)d\zeta}{\zeta-z},
~\Sb f(z)=\frac{-1}{2\pi i}\int_\C\frac{f(\zeta)d\zetab}{\zetab-\zb},\\
S_bf(z)&=\frac{1}{2\pi i}\int_\C\frac{f(\zeta)d\zetab}{\zeta-z},~
\Sb_bf(z)=\frac{-1}{2\pi i}\int_\C\frac{f(\zeta)d\zeta}{\zetab-\zb},
 \end{split}
 \end{equation}
 where $\Tb(f)=\overline{T(\bar{f})}$, $\Sb(f)=\overline{S(\bar{f})}$, and $\Sb_b(f)=\overline{S_b(\bar{f})}$.
 More generally, if $\triangle$ is a closed bounded domain, then $T_\triangle f$ and $S_\triangle f$ are defined for continuous $f$ on $\triangle$ by
 $$T_\triangle f(z)=\frac{-1}{2\pi i}\int_\triangle\frac{f(\zeta)d\zetab \wedge d\zeta}{\zeta-z},~S_{\partial\triangle} f(z)=\frac{1}{2\pi i}\int_{\partial\triangle}\frac{f(\zeta)d\zeta}{\zeta-z}.$$

The fundamental properties between operators $T,~S$ are given by \cite{nw} as follows.
\begin{lem}\label{lem0}\cite{nw}
If $f\in C^1(\D)$, then
\begin{equation} \label{eq:2}
\begin{split}
T\pb f=f-Sf ~\mbox{on}~\Int(\D),\\
\Tb\p f=f-\Sb f ~\mbox{on}~\Int(\D).\nonumber
 \end{split}
 \end{equation}

\end{lem}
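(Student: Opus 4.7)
The plan is to deduce both identities from the Cauchy--Pompeiu representation, which is itself Stokes' theorem applied to the $1$-form $\omega=\frac{f(\zeta)}{\zeta-z}d\zeta$ on the domain $\D$ with a small disk around the pole removed. Fix $z\in\Int(\D)$ and choose $\epsilon>0$ small enough that $B_\epsilon(z)\subset\Int(\D)$. On $\D\setminus B_\epsilon(z)$ the form $\omega$ is $C^1$, and since $1/(\zeta-z)$ is holomorphic in $\zeta$ there, $d\omega=\frac{\pb f(\zeta)}{\zeta-z}\,d\zetab\wedge d\zeta$.

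Applying Stokes' theorem on $\D\setminus B_\epsilon(z)$, whose oriented boundary is $\C$ (counterclockwise) minus $\partial B_\epsilon(z)$, yields
\begin{equation*}
\int_\C\frac{f(\zeta)\,d\zeta}{\zeta-z}-\int_{\partial B_\epsilon(z)}\frac{f(\zeta)\,d\zeta}{\zeta-z}=\int_{\D\setminus B_\epsilon(z)}\frac{\pb f(\zeta)}{\zeta-z}\,d\zetab\wedge d\zeta.
\end{equation*}
Parametrizing $\zeta=z+\epsilon e^{i\theta}$, the inner boundary integral equals $i\int_0^{2\pi}f(z+\epsilon e^{i\theta})\,d\theta$, which tends to $2\pi i\,f(z)$ by continuity of $f$. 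The kernel $1/|\zeta-z|$ is locally integrable in $\co$ (polar coordinates cancel the singularity), so by dominated convergence the right-hand side tends to $\int_\D\frac{\pb f(\zeta)}{\zeta-z}\,d\zetab\wedge d\zeta=-2\pi i\,T\pb f(z)$. Since $\int_\C\frac{f(\zeta)\,d\zeta}{\zeta-z}=2\pi i\,Sf(z)$, passing to the limit and dividing by $2\pi i$ rearranges to $T\pb f(z)=f(z)-Sf(z)$.

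For the second identity I would apply the first to $\bar f\in C^1(\D)$, obtaining $T\pb\bar f=\bar f-S\bar f$, and take complex conjugates. Using $\pb\bar f=\overline{\p f}$ together with the stated relations $\Tb g=\overline{T\bar g}$ and $\Sb g=\overline{S\bar g}$ (applied with $g=\p f$ and $g=f$, respectively) converts this immediately into $\Tb\p f=f-\Sb f$ on $\Int(\D)$.

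The main obstacle in this argument is the singularity at $\zeta=z$: the integrals defining $Sf$ and $T\pb f$ are improper, so one must justify excising a small disk, interchanging limit and integral on the area integral, and verifying that the boundary contribution from $\partial B_\epsilon(z)$ collapses to precisely $2\pi i\,f(z)$. Once that analytic step is secured, the remaining task is merely bookkeeping of orientation and of the sign conventions built into the definitions of $T$ and $S$ in \eqref{eq:1}, which has been tracked above.
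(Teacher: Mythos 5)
Your proof is correct. The paper does not actually prove this lemma---it is quoted from Nijenhuis--Woolf \cite{nw}---and your argument is the standard Cauchy--Pompeiu derivation: Stokes' theorem applied to $\frac{f(\zeta)}{\zeta-z}\,d\zeta$ on $\D$ with a small disk about $z$ excised, the inner boundary term collapsing to $2\pi i\,f(z)$ and the area term converging by local integrability of $1/|\zeta-z|$; this is exactly the proof in the cited source, and it is also the same excision-plus-Stokes technique the paper itself uses later in Lemma~\ref{lem4}. Your deduction of the second identity by conjugation is likewise consistent with the paper's stated conventions $\Tb(f)=\overline{T(\bar f)}$ and $\Sb(f)=\overline{S(\bar f)}$.
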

The smoothness properties of integral operator $T$ has been shown in \cite{nw}.
\begin{lem}\label{lem1}\cite{nw}
If $f\in C^\alpha(\D)$, then $Tf\in C^{1+\alpha}(\D)$. Moreover,
\begin{equation} \label{eq:2}
\begin{split}
\bar{\partial}Tf=f,~~~~~~~~{\partial}Tf=^2Tf,\nonumber
 \end{split}
 \end{equation}
and $$H_\alpha[^2Tf]\leq C_0H_\alpha[f]$$
where $C_0=\frac{12}{\alpha(1-\alpha)}$. If $f\in C^{k+\alpha}(\D)(k\geq 0)$, then $Tf\in C^{k+1+\alpha}(\D)$.
\end{lem}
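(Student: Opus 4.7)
The plan is to establish the four assertions in turn. For the identity $\bar\partial Tf = f$, use the Cauchy--Pompeiu formula from Lemma~\ref{lem0}: it gives $T\bar\partial\varphi = \varphi - S\varphi$ for smooth $\varphi$, from which $\bar\partial(Tg) = g$ for $g = \bar\partial\varphi$ follows by taking $\bar\partial$ and using that $S\varphi$ is holomorphic, and density of smooth functions in $C^\alpha(\D)$ handles general $f$. For $\partial Tf = {}^2Tf$, split
\[
Tf(z) = -\frac{1}{2\pi i}\int_\D \frac{f(\zeta)-f(z)}{\zeta-z}\,d\bar\zeta\wedge d\zeta + f(z)\cdot T\1(z);
\]
the first integrand is $O(|\zeta-z|^{\alpha-1})$ and may be differentiated in $z$ under the integral, and the resulting $\partial f(z)\cdot T\1(z)$ piece cancels against its counterpart from $\partial_z(f(z)T\1(z))$, leaving exactly ${}^2Tf(z)$.

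The Hölder estimate is the crux of the lemma. Fix $z_1, z_2 \in \D$ with $\rho = |z_1-z_2|$ and split $\D$ into $E := \D \cap (B(z_1,2\rho) \cup B(z_2,2\rho))$ and $\D\setminus E$. On $E$, bound each of $(f(\zeta)-f(z_j))/(\zeta-z_j)^2$ by $H_\alpha[f]|\zeta-z_j|^{\alpha-2}$ and integrate in polar coordinates; since $E \subset B(z_j,3\rho)$ for each $j$, each piece contributes a term of order $H_\alpha[f]\,\rho^\alpha/\alpha$. On $\D\setminus E$ use the algebraic identity
\[
\frac{f(\zeta)-f(z_1)}{(\zeta-z_1)^2} - \frac{f(\zeta)-f(z_2)}{(\zeta-z_2)^2} = \frac{[f(\zeta)-f(z_2)](z_1-z_2)(2\zeta-z_1-z_2)}{(\zeta-z_1)^2(\zeta-z_2)^2} + \frac{f(z_2)-f(z_1)}{(\zeta-z_1)^2}.
\]
Using $|\zeta-z_j|\ge 2\rho$ and $|\zeta-z_2|\ge|\zeta-z_1|/2$, the first summand is pointwise bounded by a constant times $H_\alpha[f]\,\rho\,|\zeta-z_1|^{\alpha-3}$, whose integral over $\D\setminus E$ is of order $H_\alpha[f]\,\rho^\alpha/(1-\alpha)$. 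The second summand contributes exactly zero: by Stokes' theorem, $\int_{\D\setminus E}(\zeta-z_1)^{-2}\,d\bar\zeta\wedge d\zeta$ reduces to the sum of boundary integrals along $\partial\D$, $\partial B(z_1,2\rho)$, $\partial B(z_2,2\rho)$, each of which vanishes by direct computation (use $\bar\zeta = R^2/\zeta$ on $\partial\D$ with residues, and explicit parametrization on the small circles). Tracking constants and summing the two regional contributions yields $C_0 = 12/(\alpha(1-\alpha))$.

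For higher regularity, induct on $k$: any mixed derivative $\partial^i\bar\partial^j Tf$ with $j\ge 1$ reduces via $\bar\partial Tf = f$ to a derivative of $f\in C^{k+\alpha}$, while for pure-$\partial$ derivatives one integrates by parts using $\partial_z(\zeta-z)^{-1} = -\partial_\zeta(\zeta-z)^{-1}$ to transfer $\partial_\zeta$ onto $f$, producing a $T$-transform of $\partial f \in C^{(k-1)+\alpha}$ plus a smooth $S$-type boundary integral; the inductive hypothesis then completes the proof. The main obstacle throughout is the far-region computation in the Hölder estimate: naive absolute-value splitting of the kernel difference produces a logarithmically divergent integral on $\D\setminus E$, so the proof hinges on the algebraic identity above together with the Stokes-theorem cancellation that annihilates the otherwise problematic constant-coefficient piece.
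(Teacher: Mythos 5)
The paper offers no proof of this lemma: it is quoted from Nijenhuis--Woolf \cite{nw} (the analogue of Vekua's classical theory of the operator $T$), so there is no in-paper argument to compare yours against. Your outline follows the same classical route as the cited source --- a near/far splitting about the two base points for the H\"older estimate of ${}^2Tf$, with the otherwise log-divergent constant-coefficient piece removed by an exact cancellation rather than by absolute values --- and you have correctly identified that cancellation as the crux.

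Two steps, however, do not work as written. First, in proving $\partial Tf={}^2Tf$ you differentiate $f(z)\cdot T\1(z)$ and invoke ``the resulting $\partial f(z)\cdot T\1(z)$ piece''; since $f$ is only in $C^\alpha(\D)$, $\partial f(z)$ does not exist, and neither summand of your splitting is separately differentiable. The correct argument works with the difference quotient of $Tf$ itself: writing $f(\zeta)=(f(\zeta)-f(z))+f(z)$ in
\begin{equation*}
\frac{Tf(z+h)-Tf(z)}{h}=\frac{-1}{2\pi i}\int_\D\frac{f(\zeta)\,d\zetab\wedge d\zeta}{(\zeta-z-h)(\zeta-z)},
\end{equation*}
the constant part is evaluated exactly (Lemma~\ref{lem2} with $l=0$) and yields $f(z)\bar h/h$, which is exactly the $\bar\partial$-contribution to the increment, while the remaining absolutely convergent integral tends to ${}^2Tf(z)$. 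Second, in the far-region Stokes argument the set $E$ is the union of two disks of radius $2\rho$ whose centers are only $\rho$ apart, so they overlap heavily and may also protrude past $\C$; hence $\partial(\D\setminus E)$ is \emph{not} the disjoint union of $\C$, $\partial B(z_1,2\rho)$ and $\partial B(z_2,2\rho)$, and the three full-circle residue computations you invoke do not apply to the arcs that actually bound the region. The standard repair is to excise the single disk $B(z_1,2\rho)$ (which already contains $z_2$) and to bound $\bigl|\int_{\D\setminus B(z_1,2\rho)}(\zeta-z_1)^{-2}\,d\zetab\wedge d\zeta\bigr|$ by an absolute constant, using the vanishing of the angular integral of $e^{-2i\theta}$ over full annuli together with a separate estimate for the part of the annulus lying outside $\D$. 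Finally, the value $C_0=12/(\alpha(1-\alpha))$ is asserted rather than derived: your splitting produces a term of order $1/\alpha$ from $E$ and one of order $1/(1-\alpha)$ from the complement, but the numerical prefactors it gives do not visibly combine to $12/(\alpha(1-\alpha))$, so sharper bookkeeping (or a different excision radius) is needed to recover the stated constant.
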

Meanwhile, if replace $\partial, \pb, T, ^2T$ with $ \pb, \partial,\Tb, ^2\Tb$, respectively, then one can get the similar result as above lemma for $\Tb$.

\begin{lem}\label{lem2}\cite{p}
It holds for $l\geq 0$ that
\begin{equation} \label{eq:3}
\begin{split}
\int_\triangle\frac{(\zetab-\zb_0)^l}{\zeta-w}d\zetab \wedge d\zeta=\frac{-2\pi i}{l+1}(\wb-\zb_0)^{l+1},\nonumber
 \end{split}
 \end{equation}
where $\triangle=\{\zeta\in\co||\zeta-\zb_0|\leq r\}$.
\end{lem}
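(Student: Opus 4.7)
The plan is to write the integrand as $\pb_\zeta$ of an explicit antiderivative and then to apply the Cauchy--Pompeiu identity (Lemma~\ref{lem0}, stated for $\D$ but valid verbatim for any disc $\triangle$ with $T_\triangle$, $S_{\partial\triangle}$ in place of $T$, $S$). Specifically, set
\[
h(\zeta)=\frac{(\zetab-\zb_0)^{l+1}}{l+1},
\]
so that $\pb h=(\zetab-\zb_0)^l$. For $w\in\Int(\triangle)$, Cauchy--Pompeiu gives
\[
\frac{-1}{2\pi i}\int_\triangle\frac{(\zetab-\zb_0)^l}{\zeta-w}\,d\zetab\wedge d\zeta
=T_\triangle(\pb h)(w)=h(w)-S_{\partial\triangle}h(w).
\]
Since $h(w)=(\wb-\zb_0)^{l+1}/(l+1)$, the claim will follow once the boundary term $S_{\partial\triangle}h(w)$ is shown to vanish.

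On $\partial\triangle$ one has $|\zeta-z_0|^2=r^2$, so $\zetab-\zb_0=r^2/(\zeta-z_0)$ and
\[
h(\zeta)\big|_{\partial\triangle}=\frac{r^{2(l+1)}}{(l+1)(\zeta-z_0)^{l+1}}.
\]
Plugging this into the definition of $S_{\partial\triangle}$ reduces the whole problem to evaluating
\[
\frac{1}{2\pi i}\int_{\partial\triangle}\frac{d\zeta}{(\zeta-z_0)^{l+1}(\zeta-w)}.
\]
The integrand is rational with poles at $\zeta=z_0$ (of order $l+1$) and $\zeta=w$ (simple), both inside $\triangle$. A short residue calculation using $\tfrac{d^l}{d\zeta^l}(\zeta-w)^{-1}\big|_{\zeta=z_0}=(-1)^l\,l!\,(z_0-w)^{-(l+1)}$ shows the two residues are opposite and cancel; equivalently, the integrand decays like $|\zeta|^{-(l+2)}$ at infinity, so its integral over any circle enclosing both singularities is zero, and the residue theorem forces the integral over $\partial\triangle$ to vanish as well. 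Hence $S_{\partial\triangle}h(w)=0$.

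Combining the two steps yields
\[
\int_\triangle\frac{(\zetab-\zb_0)^l}{\zeta-w}\,d\zetab\wedge d\zeta=-2\pi i\,h(w)=\frac{-2\pi i}{l+1}(\wb-\zb_0)^{l+1},
\]
as claimed. The only mildly delicate step is the vanishing of the Cauchy boundary integral of $h|_{\partial\triangle}$; that is handled by the residue cancellation described above, and everything else reduces to a direct application of Cauchy--Pompeiu to the explicit antiderivative $h$.
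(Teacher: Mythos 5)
Your proof is correct and follows essentially the same route as the paper: apply the Cauchy--Pompeiu identity (Lemma~\ref{lem0}) to the antiderivative $(\zetab-\zb_0)^{l+1}/(l+1)$, rewrite the boundary term using $\zetab-\zb_0=r^2/(\zeta-z_0)$ on $\partial\triangle$, and kill the resulting rational Cauchy integral by the residue theorem. No substantive differences to report.
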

\begin{proof}
It is easy to get Lemma \ref{lem0} if we replace $\D,~\C$ with $\triangle,~\partial\triangle$, respectively. Apply Lemma \ref{lem0} to the function $(\zetab-\zb_0)^{l+1}$, we can obtain that
 \begin{equation}
\begin{split}
\int_\triangle\frac{(\zetab-\zb_0)^l}{\zeta-w}d\zetab \wedge d\zeta=&\frac{-2\pi i}{l+1}T_\triangle (\pb(\zetab-\zb_0)^{l+1})(w)\\
=&\frac{-2\pi i}{l+1}[(\zetab-\zb_0)^{l+1}-S_\triangle((\zetab-\zb_0)^{l+1})](w)\\
=&\frac{-2\pi i}{l+1}[(\wb-\zb_0)^{l+1}-\frac{1}{2\pi i}\int_{|\zeta-z_0|=r}\frac{(\zetab-\zb_0)^{l+1}}{\zeta-w}d\zeta]\\
=&\frac{-2\pi i}{l+1}[(\wb-\zb_0)^{l+1}-\frac{r^{2(l+1)}}{2\pi i}\int_{|\zeta-z_0|=r}\frac{1}{(\zeta-z_0)^{l+1}(\zeta-w)}d\zeta]\\
=&\frac{-2\pi i}{l+1}(\wb-\zb_0)^{l+1},\nonumber
 \end{split}
 \end{equation}
where the last equality comes from the residue theorem. The lemma is proved.
\end{proof}
\section{High-order Green operator on $\D$}
Denote $T^2=TT,~\Tb^2=\Tb\Tb$, similar notations for $T^\mu,\Tb^\nu,T^\mu\Tb^\nu$ for any $\mu,\nu>0$. In this section, we will get the explicit expression of $T^\mu\Tb^\nu f$.
\begin{thm}\label{thm0}
Given $f\in C^\alpha(\D)$, $T^kf,~\Tb^kf\in C^{k+\alpha}(\D)$ with integer $k>0$, and
\begin{equation} \label{eq:3}
\begin{split}
T^kf(z)=&\frac{(-1)^k}{(k-1)!\cdot 2\pi i}\int_\D\frac{(\zetab-\zb)^{k-1}f(\zeta)}{\zeta-z}d\zetab \wedge d\zeta, \end{split}
 \end{equation}
 \begin{equation} \label{eq:3.5}
\begin{split}
\Tb^kf(z)=&\frac{(-1)^k}{(k-1)!\cdot 2\pi i}\int_\D\frac{(\zeta-z)^{k-1}f(\zeta)}{\zetab-\zb}d\zetab \wedge d\zeta.
 \end{split}
 \end{equation}
\end{thm}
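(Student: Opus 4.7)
The natural plan is induction on $k\ge 1$, with the base case $k=1$ being just the definition of $T$ in (\ref{eq:1}). For the inductive step I would write $T^{k+1}f(z) = T(T^k f)(z)$, substitute the inductive expression for $T^k f(w)$ into the outer $T$-integral, and invoke Fubini (justified because the iterated kernel has only the two singularities $1/|w-z|$ and $1/|\zeta - w|$, each integrable in a single variable, against a bounded $f$). After pulling out the constants, the inductive step reduces to proving the single kernel identity
\begin{equation*}
I(\zeta,z) \;:=\; \int_\D \frac{(\bar\zeta - \bar w)^{k-1}}{(w-z)(\zeta - w)}\, d\bar w \wedge dw \;=\; \frac{2\pi i\,(\bar\zeta - \bar z)^k}{k(\zeta - z)}.
\end{equation*}

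To establish this identity I apply the partial-fraction decomposition
$\tfrac{1}{(w-z)(\zeta - w)} = \tfrac{1}{\zeta - z}\bigl(\tfrac{1}{w-z} + \tfrac{1}{\zeta - w}\bigr)$,
which splits $I$ into two integrals of the form $\int_\D (\bar\zeta - \bar w)^{k-1}/(w - a)\, d\bar w \wedge dw$ with $a = z$ and $a = \zeta$. Each such integral is evaluated by applying Lemma \ref{lem0} to the auxiliary function $g(w) := -\tfrac{1}{k}(\bar\zeta - \bar w)^k$, which satisfies $\bar\partial_w g = (\bar\zeta - \bar w)^{k-1}$; the lemma rewrites each area integral as $-2\pi i[g(a) - Sg(a)]$. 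On $\C$ one has $\bar w = R^2/w$, so the boundary Cauchy integral $Sg(a)$ becomes a contour integral of a rational function, and the residue theorem (combined with the observation that the residue at infinity is $-\bar\zeta^k/k$) evaluates $Sg(a) = -\bar\zeta^k/k$ for both choices of $a$. Summing the two partial-fraction pieces, the $\bar\zeta^k$ contributions arising from the boundary Cauchy integrals enter with opposite signs and cancel, leaving exactly $\tfrac{2\pi i(\bar\zeta - \bar z)^k}{k(\zeta - z)}$, which closes the induction.

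The regularity $T^k f \in C^{k+\alpha}(\D)$ follows by iterating Lemma \ref{lem1} a total of $k$ times. Formula (\ref{eq:3.5}) for $\bar T^k$ is then obtained from (\ref{eq:3}) via the conjugation identity $\bar T f = \overline{T \bar f}$, taking care of the signs from $\overline{d\bar\zeta \wedge d\zeta} = -d\bar\zeta \wedge d\zeta$ and $\overline{2\pi i} = -2\pi i$.

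The main obstacle I foresee is the kernel identity and the clean cancellation inside it: each of the two partial-fraction pieces produces an \emph{extraneous} $\bar\zeta^k$ contribution coming from the Cauchy boundary integral rather than from the interior value $g(a)$, and only after recombining the two pieces do these extraneous terms cancel and leave the pure $(\bar\zeta - \bar z)^k$. The remaining bookkeeping -- signs, powers of $2\pi i$, and factorials through the induction -- is routine once the kernel identity is in hand.
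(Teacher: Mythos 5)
Your proposal is correct, and its skeleton --- induction on $k$, Fubini, and the partial-fraction identity $\tfrac{1}{(w-z)(\zeta-w)}=\tfrac{1}{\zeta-z}\bigl(\tfrac{1}{w-z}+\tfrac{1}{\zeta-w}\bigr)$ --- is exactly the paper's. The one place you diverge is in evaluating the two resulting area integrals $\int_\D (\bar\zeta-\bar w)^{k-1}/(w-a)\,d\bar w\wedge dw$: the paper binomially expands $(\bar\zeta-\bar w)^{k-1}$ into monomials in $\bar w$, applies Lemma \ref{lem2} (which handles $\int\bar w^l/(w-a)$, with a vanishing boundary term because the contour is centered at the right point) to each monomial, and then re-sums the binomial series to recover $(\bar\zeta-\bar z)^{k}$; you instead apply Lemma \ref{lem0} directly to the shifted antiderivative $g(w)=-\tfrac{1}{k}(\bar\zeta-\bar w)^{k}$, which produces a nonzero boundary Cauchy integral $Sg(a)=-\bar\zeta^{k}/k$ (computed via $\bar w=R^2/w$ on $\C$ and residues) that you then observe cancels between the two partial-fraction pieces. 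Your variant is slightly cleaner in that it avoids the expansion-and-resummation bookkeeping, at the cost of having to verify the cancellation of the extraneous boundary contributions --- which you correctly identify as the crux and which does check out ($Sg(a)$ is indeed independent of $a$, so the two pieces contribute $\pm 2\pi i\,\bar\zeta^{k}/k$). Your derivation of (\ref{eq:3.5}) by conjugating (\ref{eq:3}), with the two sign flips from $\overline{d\bar\zeta\wedge d\zeta}$ and $\overline{2\pi i}$ cancelling, is also valid and a bit more economical than the paper's ``the proof is similar.''
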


\begin{proof}
When $k=1$, (\ref{eq:3}) is obvious.

Now we assume (\ref{eq:3}) is valid when for $k-1$, i.e.,
\begin{equation} \label{eq:4}
\begin{split}
T^{k-1}f(z)=\frac{(-1)^{k-1}}{(k-2)!\cdot 2\pi i}\int_\D\frac{(\zetab-\zb)^{k-2}f(\zeta)}{\zeta-z}d\zetab \wedge d\zeta.\nonumber
 \end{split}
 \end{equation}
Hence, we can obtain with Lemma \ref{lem2} that
\begin{equation} \label{eq:5}
\begin{split}
T^kf(z)=&TT^{k-1}f(z)\\
       =&\frac{-1}{2\pi i}\int_\D\frac{T^{k-1}f(\zeta)}{\zeta-z}d\zetab \wedge d\zeta\\
       =&\frac{-1}{2\pi i}\int_\D\frac{\frac{(-1)^{k-1}}{(k-2)!\cdot 2\pi i}\int_\D\frac{(\etab-\zetab)^{k-2}f(\eta)}{\eta-\zeta}d\etab \wedge d\eta}{\zeta-z}d\zetab \wedge d\zeta\\
       =&\frac{(-1)^{k}}{(k-2)!\cdot (2\pi i)^2}\int_\D\int_\D\frac{(\etab-\zetab)^{k-2}d\zetab \wedge d\zeta}{(\eta-\zeta)(\zeta-z)}f(\eta) d\etab \wedge d\eta\\
=&\frac{(-1)^{k}}{(k-2)!\cdot (2\pi i)^2}\int_\D \frac{f(\eta) d\etab \wedge d\eta}{\eta-z}\int_\D(\etab-\zetab)^{k-2}(\frac{1}{\eta-\zeta}+\frac{1}{\zeta-z})d\zetab \wedge d\zeta \\
=&\frac{(-1)^{k}}{(k-2)!\cdot (2\pi i)^2}\int_\D\frac{f(\eta) d\etab \wedge d\eta}{\eta-z}[\int_\D\frac{(\etab-\zetab)^{k-2}}{\eta-\zeta}d\zetab \wedge d\zeta\\&+\int_\D\frac{(\etab-\zetab)^{k-2}}{\zeta-z}d\zetab \wedge d\zeta]\\
=&\frac{(-1)^{k}}{(k-2)!\cdot (2\pi i)^2}\int_\D\frac{f(\eta) d\etab \wedge d\eta}{\eta-z}[\int_\D\frac{\sum\limits_{l=0}^{k-2}\binom{k-2}{l}\etab^l(-\zetab)^{k-2-l}}{\eta-\zeta}d\zetab \wedge d\zeta\\&+\int_\D\frac{\sum\limits_{l=0}^{k-2}\binom{k-2}{l}\etab^l(-\zetab)^{k-2-l}}{\zeta-z}d\zetab \wedge d\zeta]\\
=&\frac{(-1)^{k}}{(k-2)!\cdot (2\pi i)^2}\int_\D\frac{\sum\limits_{l=0}^{k-2}\binom{k-2}{l}\etab^l f(\eta) d\etab \wedge d\eta}{\eta-z}[\int_\D\frac{(-\zetab)^{k-2-l}}{\eta-\zeta}d\zetab \wedge d\zeta\\&+\int_\D\frac{(-\zetab)^{k-2-l}}{\zeta-z}d\zetab \wedge d\zeta]\\
\nonumber
 \end{split}
 \end{equation}
 \begin{equation} \label{eq:5}
\begin{split}
=&\frac{(-1)^{k}}{(k-2)!\cdot (2\pi i)^2}\int_\D\frac{f(\eta)}{\eta-z}\sum\limits_{l=0}^{k-2}\binom{k-2}{l}\etab^l[\frac{2\pi i}{k-1-l}\bar{\eta}^{k-1-l}(-1)^{k-2-l}\\&+\frac{-2\pi i}{k-1-l}\bar{z}^{k-1-l}(-1)^{k-2-l}]d\etab \wedge d\eta\\
=&\frac{(-1)^{k}}{(k-2)!\cdot (2\pi i)^2}\int_\D\frac{-2\pi if(\eta)}{(\eta-z)(k-1)}\sum\limits_{l=0}^{k-1}\binom{k-1}{l}\etab^l[(-\etab)^{k-1-l}-(-\bar{z})^{k-1-l}]d\etab \wedge d\eta\\
=&\frac{(-1)^{k}}{(k-2)!\cdot (2\pi i)^2}\int_\D\frac{-2\pi if(\eta)}{(\eta-z)(k-1)}[(\etab-\etab)^{k-1}-(\etab-\zb)^{k-1}]d\etab \wedge d\eta\\
=&\frac{(-1)^{k}}{(k-2)!\cdot (2\pi i)^2}\int_\D\frac{2\pi if(\eta)}{(\eta-z)(k-1)}(\etab-\zb)^{k-1}d\etab \wedge d\eta\\
       =&\frac{(-1)^{k}}{(k-1)!\cdot 2\pi i}\int_\D\frac{(\etab-\zb)^{k-1}f(\eta)}{\eta-z} d\etab \wedge d\eta.\nonumber
 \end{split}
 \end{equation}
Thus, (\ref{eq:3}) is proved. For (\ref{eq:3.5}), the proof is similar, we omit here.
\end{proof}

\begin{rem}
From Theorem \ref{thm0}, we have $\Tb^k \bar{f}(z)=\overline{T^k{f}(z)}$.
\end{rem}

\begin{lem}\label{lem4}
For any $a,~b\in \D$ and $a\neq b$, integer $k>0$,
\begin{equation} \label{eq:8}
\begin{split}
\int_\D\frac{(\zeta-b)^{k-1}d\zetab \wedge d\zeta}{(\zeta-a)(\zetab-\bb)}=2\pi i\Big( C_1(a,b,k)+(a-b)^{k-1}\ln \frac{R^2-a\bb}{|a-b|^2}\Big),\nonumber
 \end{split}
 \end{equation}
 where $C_1(a,b,k)=\sum\limits_{l=1}^{k-1}(\frac{-b^l}{l})\Big(\sum\limits_{j=0}^{k-1-l}\binom{k-1}{j}a^{k-1-j}(-b)^j\Big)$ for $k>1$ and $C_1(a,b,1)=0$.
\end{lem}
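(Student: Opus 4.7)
The plan is to decompose the integrand via the binomial expansion $(\zeta-b)^{k-1}=((\zeta-a)+(a-b))^{k-1}$ and evaluate the resulting pieces with two ingredients: a conjugate form of Lemma~\ref{lem2}, and a Stokes/Cauchy--Pompeiu identity applied to a primitive built from $\ln|\zeta-b|^2$.

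More precisely, I would split the integral as
\begin{equation*}
\int_\D\frac{(\zeta-b)^{k-1}}{(\zeta-a)(\zetab-\bb)}d\zetab\wedge d\zeta = (a-b)^{k-1}J_0 + \sum_{j=1}^{k-1}\binom{k-1}{j}(a-b)^{k-1-j}K_j,
\end{equation*}
where $J_0=\int_\D d\zetab\wedge d\zeta/[(\zeta-a)(\zetab-\bb)]$ and $K_j=\int_\D (\zeta-a)^{j-1}/(\zetab-\bb)\,d\zetab\wedge d\zeta$. For each $K_j$, I would expand $(\zeta-a)^{j-1}=\sum_{m}\binom{j-1}{m}(-a)^{j-1-m}\zeta^m$ and apply the conjugate of Lemma~\ref{lem2} (obtained by conjugating the identity, using $\overline{d\zetab\wedge d\zeta}=-d\zetab\wedge d\zeta$, with $z_0=0$ and $\triangle=\D$), giving $\int_\D \zeta^m/(\zetab-\bb)\,d\zetab\wedge d\zeta = -2\pi i\,b^{m+1}/(m+1)$ for $m\geq 0$.

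For $J_0$ I would exploit the identity $\pb\ln|\zeta-b|^2=1/(\zetab-\bb)$, so that the integrand is $\pb\bigl[\ln|\zeta-b|^2/(\zeta-a)\bigr]$ on $\D\setminus\{a,b\}$. Applying Stokes' theorem on $\D\setminus(B_\epsilon(a)\cup B_\delta(b))$ and letting $\epsilon,\delta\to 0$: the small circle around $b$ is negligible since the integrand is $O(|\ln\delta|)$ there while the arc length is $2\pi\delta$; the small circle around $a$ contributes $-2\pi i\ln|a-b|^2$ by the residue theorem (minus sign from the inner-boundary orientation); and the boundary integral on $\C$ is computed by inserting the Laurent expansion
\begin{equation*}
\ln|\zeta-b|^2 = 2\ln R - \sum_{l\geq 1}\frac{b^l}{l\zeta^l} - \sum_{l\geq 1}\frac{\bb^l\zeta^l}{lR^{2l}},
\end{equation*}
valid on $\C$ (using $\zetab=R^2/\zeta$ and $|b|<R$), together with $1/(\zeta-a)=\sum_{n\geq 0}a^n/\zeta^{n+1}$. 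Extracting the coefficient of $1/\zeta$ yields $\int_\C \ln|\zeta-b|^2/(\zeta-a)\,d\zeta = 2\pi i\ln(R^2-a\bb)$, whence $J_0=2\pi i\ln\frac{R^2-a\bb}{|a-b|^2}$.

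Finally I would assemble everything: the term $(a-b)^{k-1}J_0$ supplies exactly the logarithmic contribution in the statement, while the $K_j$ sum produces a double sum in $j$ and $m$ which, after the change of index $l=m+1$ and swapping the order of summation, rearranges into $2\pi i\,C_1(a,b,k)$. The main obstacle I anticipate is this last combinatorial bookkeeping step---matching index ranges and binomial coefficients precisely to the stated closed form for $C_1$; the analytic steps (the excision limits near $a$ and $b$, and convergence of the Laurent series on $\C$) are routine, since $\ln|\zeta-b|^2$ is locally $L^1$ near $b$ and $|a\bb/R^2|<1$ keeps the principal branch of $\ln(R^2-a\bb)$ well-defined.
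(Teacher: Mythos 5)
Your decomposition is sound, and it is genuinely different from the paper's route. The paper applies Stokes' theorem with the primitive $(\zeta-b)^{k-1}\ln|\zeta-b|^2\,d\zeta/(\zeta-a)$ to the full $k$-dependent integrand, excises both singularities, and then splits $\ln|\zeta-b|^2$ on the boundary circle as $\ln|\zeta|^2+\ln(1-b/\zeta)+\ln(1-\bb/\zetab)$, evaluating the middle piece by a residue at infinity. You instead expand $(\zeta-b)^{k-1}=((\zeta-a)+(a-b))^{k-1}$ first, which confines the logarithmic analysis entirely to the $k=1$ integral $J_0$ and reduces the rest to the area integrals $K_j$, which the conjugated Lemma \ref{lem2} evaluates in closed form. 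Your ingredients (the conjugate identity, the excision limits for $J_0$, and the boundary series giving $2\pi i\ln(R^2-a\bb)$) are all correct, and your scheme is arguably cleaner since the delicate work is done once rather than for every $k$.

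However, the step you flag as ``combinatorial bookkeeping'' will not close as you expect: your $K_j$-sum does \emph{not} rearrange into $2\pi i\,C_1(a,b,k)$ with $C_1$ as printed, because the printed $C_1$ is erroneous. Test $k=2$, $a=0$: your pieces give $(0-b)J_0+K_1=2\pi i\big(b\ln\frac{|b|^2}{R^2}-b\big)$, whereas the lemma asserts $2\pi i\,b\ln\frac{|b|^2}{R^2}$ since $C_1(0,b,2)=0$; a direct polar-coordinate evaluation of $\int_\D\frac{(\zeta-b)\,d\zetab\wedge d\zeta}{\zeta(\zetab-\bb)}$ (e.g.\ $R=1$, $b=\tfrac12$ yields $2\pi i(-\ln 2-\tfrac12)$, not $-2\pi i\ln 2$) confirms your value. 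The source of the discrepancy is the paper's residue computation for $I_1$: the coefficient of $1/\zeta$ in the expansion at infinity of $(\zeta-b)^{k-1}\zeta^{-l}/(\zeta-a)$ is $\sum_{j=0}^{k-1-l}\binom{k-1}{j}a^{k-1-l-j}(-b)^j$, not $\sum_{j=0}^{k-1-l}\binom{k-1}{j}a^{k-1-j}(-b)^j$. With the corrected constant
\[
C_1(a,b,k)=\sum_{l=1}^{k-1}\frac{-b^l}{l}\sum_{j=0}^{k-1-l}\binom{k-1}{j}a^{k-1-l-j}(-b)^j,
\]
your assembly does match (I verified $k=2$ and $k=3$ explicitly, and both routes compute the same integral, so they must agree in general). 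So your method is fine; the one thing you must not do is force agreement with the constant as printed in the statement.
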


\begin{proof}

For any small $\varepsilon>0$, denote $\D_a=\{|\zeta-a|<\varepsilon\},~\D_b=\{|\zeta-b|<\varepsilon\}$, $\C_a=\{|\zeta-a|=\varepsilon\},~\C_b=\{|\zeta-b|=\varepsilon\}$. Then for $k>1$,
 \begin{equation} \label{eq:9}
\begin{split}
\int_{\D\backslash \{\D_a\cup\D_b\}}&\frac{(\zeta-b)^{k-1}d\zetab \wedge d\zeta}{(\zeta-a)(\zetab-\bb)}\\
=&\int_{\D}d\bigg(\frac{(\zeta-b)^{k-1}\ln |\zeta-b|^2 d\zeta}{\zeta-a}\bigg)\\
&-\int_{\D_a}d\bigg(\frac{(\zeta-b)^{k-1}\ln |\zeta-b|^2 d\zeta}{\zeta-a}\bigg)\\
&-\int_{\D_b}d\bigg(\frac{(\zeta-b)^{k-1}\ln |\zeta-b|^2 d\zeta}{\zeta-a}\bigg)\\
=&\int_{\C}\frac{(\zeta-b)^{k-1}\ln |\zeta-b|^2 d\zeta}{\zeta-a}\\
&-\int_{\C_a}\frac{(\zeta-b)^{k-1}\ln |\zeta-b|^2 d\zeta}{\zeta-a}\\
&-\int_{\C_b}\frac{(\zeta-b)^{k-1}\ln |\zeta-b|^2 d\zeta}{\zeta-a},
 \end{split}
 \end{equation}
where the last equation comes from Stokes formula.
 \begin{equation} \label{eq:10}
\begin{split}
\int_{\D_a}&\frac{(\zeta-b)^{k-1}d\zetab \wedge d\zeta}{(\zeta-a)(\zetab-\bb)}\\
=&2i\int_{0}^{2\pi}\int_{0}^{\varepsilon}\frac{(re^{i\theta}+a-b)^{k-1}rdrd\theta}{re^{i\theta}(re^{-i\theta}+\ab-\bb)}\\
=&2i\int_{0}^{2\pi}\int_{0}^{\varepsilon}\frac{(re^{i\theta}+a-b)^{k-1}drd\theta}{r+(\ab-\bb)e^{i\theta}}\\
=&2i\int_{0}^{2\pi}\int_{0}^{\varepsilon}\frac{e^{i(k-1)\theta}\big(r+(\ab-\bb)e^{i\theta}-(\ab-\bb)e^{i\theta}+a-b\big)^{k-1}}{r+(\ab-\bb)e^{i\theta}}drd\theta\\
=&2i\int_{0}^{2\pi}\int_{0}^{\varepsilon}{e^{i(k-1)\theta}\sum\limits_{l=0}^{k-1}\binom{k-1}{l}\big(r+(\ab-\bb)e^{i\theta}\big)^{l-1}\big(-(\ab-\bb)e^{i\theta}+a-b\big)^{k-1-l}}drd\theta\\
=&2i\int_{0}^{2\pi}\int_{0}^{\varepsilon}{e^{i(k-1)\theta}\big(r+(\ab-\bb)e^{i\theta}\big)^{-1}\big(-(\ab-\bb)e^{i\theta}+a-b\big)^{k-1}}drd\theta\\
&+2i\int_{0}^{2\pi}\int_{0}^{\varepsilon}{e^{i(k-1)\theta}\sum\limits_{l=1}^{k-1}\binom{k-1}{l}\big(r+(\ab-\bb)e^{i\theta}\big)^{l-1}\big(-(\ab-\bb)e^{i\theta}+a-b\big)^{k-1-l}}drd\theta\\
=&2i\int_{0}^{2\pi}{e^{i(k-1)\theta}\big(-(\ab-\bb)e^{i\theta}+a-b\big)^{k-1}\ln\big(r+(\ab-\bb)e^{i\theta}\big)^{-1}}\Big|_{0}^{\varepsilon}d\theta\\
&+2i\int_{0}^{2\pi}\int_{0}^{\varepsilon}\frac{e^{i(k-1)\theta}}{l}{\sum\limits_{l=1}^{k-1}\binom{k-1}{l}\big(-(\ab-\bb)e^{i\theta}+a-b\big)^{k-1-l}\big(r+(\ab-\bb)e^{i\theta}\big)^{l}}\Big|_{0}^{\varepsilon}d\theta,
 \end{split}
 \end{equation}
which converges to 0 when $\varepsilon\rightarrow 0$.

 \begin{equation} \label{eq:11}
\begin{split}
\int_{\D_b}&\frac{(\zeta-b)^{k-1}d\zetab \wedge d\zeta}{(\zeta-a)(\zetab-\bb)}\\
=&2i\int_{0}^{2\pi}\int_{0}^{\varepsilon}\frac{(re^{i\theta})^{k-1}rdrd\theta}{re^{-i\theta}(re^{i\theta}+b-a)}\\
=&2i\int_{0}^{2\pi}\int_{0}^{\varepsilon}\frac{(re^{i\theta}+(b-a)-(b-a))^{k-1}drd\theta}{r+(b-a)e^{-i\theta}}\\
=&2i\int_{0}^{2\pi}\int_{0}^{\varepsilon}\frac{e^{i(k-1)\theta}\big(r+(b-a)e^{-i\theta}-(b-a)e^{-i\theta}\big)^{k-1}}{r+(b-a)e^{-i\theta}}drd\theta\\
=&2i\int_{0}^{2\pi}\int_{0}^{\varepsilon}{e^{i(k-1)\theta}\sum\limits_{l=0}^{k-1}\binom{k-1}{l}\big(r+(b-a)e^{-i\theta}\big)^{l-1}\big(-(b-a)e^{-i\theta}\big)^{k-1-l}}drd\theta\\
=&2i\int_{0}^{2\pi}\int_{0}^{\varepsilon}{e^{i(k-1)\theta}\big(r+(b-a)e^{-i\theta}\big)^{-1}\big(-(b-a)e^{-i\theta}\big)^{k-1}}drd\theta\\
&+2i\int_{0}^{2\pi}\int_{0}^{\varepsilon}{e^{i(k-1)\theta}\sum\limits_{l=1}^{k-1}\binom{k-1}{l}\big(r+(b-a)e^{-i\theta}\big)^{l-1}\big(-(b-a)e^{-i\theta}\big)^{k-1-l}}drd\theta\\
=&2i\int_{0}^{2\pi}{e^{i(k-1)\theta}\big(-(b-a)e^{-i\theta}\big)^{k-1}\ln\big(r+(b-a)e^{-i\theta}\big)^{-1}}\Big|_{0}^{\varepsilon}d\theta\\
&+2i\int_{0}^{2\pi}\int_{0}^{\varepsilon}\frac{e^{i(k-1)\theta}}{l}{\sum\limits_{l=1}^{k-1}\binom{k-1}{l}\big(-(b-a)e^{-i\theta}\big)^{k-1-l}\big(r+(b-a)e^{-i\theta}\big)^{l}}\Big|_{0}^{\varepsilon}d\theta,
 \end{split}
 \end{equation}
which also converges to 0 when $\varepsilon\rightarrow 0$.

 \begin{equation} \label{eq:12}
\begin{split}
\int_{\C_a}&\frac{(\zeta-b)^{k-1}\ln |\zeta-b|^2 d\zeta}{\zeta-a}\\
          =&\int_{0}^{2\pi}\frac{(\varepsilon e^{i\theta}+a-b)^{k-1}\ln |\varepsilon e^{i\theta}+a-b|^2 \varepsilon e^{i\theta}id\theta}{\varepsilon e^{i\theta}}\\
          =&\int_{0}^{2\pi}{(\varepsilon e^{i\theta}+a-b)^{k-1}\ln |\varepsilon e^{i\theta}+a-b|^2 id\theta},
 \end{split}
 \end{equation}
which converges to $2\pi i(a-b)^{k-1}\ln |a-b|^2$ when $\varepsilon\rightarrow 0$.

 \begin{equation} \label{eq:14}
\begin{split}
\int_{\C_b}&\frac{(\zeta-b)^{k-1}\ln |\zeta-b|^2 d\zeta}{\zeta-a}\\
          =&\int_{0}^{2\pi}\frac{(\varepsilon e^{i\theta})^{k-1}\ln |\varepsilon |^2 \varepsilon e^{i\theta}id\theta}{\varepsilon e^{i\theta}+b-a}\\
          =&\int_{0}^{2\pi}\frac{(\varepsilon e^{i\theta})^{k}\ln |\varepsilon |^2 id\theta}{\varepsilon e^{i\theta}+b-a},
 \end{split}
 \end{equation}
which converges to 0 when $\varepsilon\rightarrow 0$.

Thus, it comes from (\ref{eq:9})-(\ref{eq:14}) that
 \begin{equation} \label{eq:15}
\begin{split}
\int_\D\frac{(\zeta-b)^{k-1}d\zetab \wedge d\zeta}{(\zeta-a)(\zetab-\bb)}=&\int_{\C}\frac{(\zeta-b)^{k-1}\ln |\zeta-b|^2 d\zeta}{\zeta-a}-2\pi i(a-b)^{k-1}\ln |a-b|^2\\
=&\int_{\C}\frac{(\zeta-b)^{k-1}\Big(\ln |\zeta|^2+\ln(1-\frac{b}{\zeta})+\ln(1-\frac{\bb}{\zetab})\Big)}{\zeta-a}d\zeta-2\pi i(a-b)^{k-1}\ln |a-b|^2\\
=&\int_{\C}\frac{(\zeta-b)^{k-1}\ln R^2}{\zeta-a} d\zeta+\int_{\C}\frac{(\zeta-b)^{k-1}\ln(1-\frac{b}{\zeta})}{\zeta-a}d\zeta\\
&+\int_{\C}\frac{(\zeta-b)^{k-1}\ln(1-\frac{\bb}{\zetab})}{\zeta-a}d\zeta-2\pi i(a-b)^{k-1}\ln |a-b|^2\\
=&2\pi i(a-b)^{k-1}\ln R^2+I_1+I_2-2\pi i(a-b)^{k-1}\ln |a-b|^2.
\end{split}
 \end{equation}
$I_1$ is given as follows,
  \begin{equation} \label{eq:17}
\begin{split}
I_1=&\int_{\C}\frac{(\zeta-b)^{k-1}\ln(1-\frac{b}{\zeta})}{\zeta-a}d\zeta\\
   =&\int_{\C}\frac{(\zeta-b)^{k-1}}{\zeta-a}\sum\limits_{l=1}^{\infty}\frac{-1}{l}(\frac{b}{\zeta})^ld\zeta\\
   =&\sum\limits_{l=1}^{\infty}\frac{-b^l}{l}\int_{\C}\frac{(\zeta-b)^{k-1}}{\zeta-a}(\frac{1}{\zeta})^ld\zeta\\
   =&2\pi i\sum\limits_{l=1}^{k-1}(\frac{-b^l}{l})\Big(\sum\limits_{j=0}^{k-1-l}\binom{k-1}{j}a^{k-1-j}(-b)^j\Big).
\end{split}
 \end{equation}
 In fact, we have applied Residue theorem to get the last equation. Consider $\phi(\zeta)=\frac{(\zeta-b)^{k-1}}{\zeta-a}(\frac{1}{\zeta})^l$. The integral $\int_{\C}\frac{(\zeta-b)^{k-1}}{\zeta-a}(\frac{1}{\zeta})^ld\zeta$ equals $-2\pi i$ times the residue of $\phi$ at $\infty$ which is the opposite number
of the coefficient of $\frac{1}{\zeta}$ in the Rolland expansion of $\phi(\zeta)$.

   \begin{equation} \label{eq:18}
\begin{split}
\frac{(\zeta-b)^{k-1}}{\zeta-a}(\frac{1}{\zeta})^l=&\frac{(\zeta-b)^{k-1}}{\zeta(1-\frac{a}{\zeta})}(\frac{1}{\zeta})^l\\
=&\frac{1}{\zeta^{l+1}}\Big(\sum\limits_{p=0}^{\infty}(\frac{a}{\zeta})^p\Big)\Big(\sum\limits_{q=0}^{k-1}\binom{k-1}{q}\zeta^q(-b)^{k-1-q}\Big),\nonumber
\end{split}
 \end{equation}
 which implies that there is no $\frac{1}{\zeta}$ in the expansion unless $l\leq k-1$. Thus for any $1\leq l\leq k-1$, we get the coefficient of $\frac{1}{\zeta}$ in the Rolland expansion of $\phi(\zeta)$ as
   \begin{equation} \label{eq:19}
\begin{split}
\sum\limits_{j=0}^{k-1-l}\binom{k-1}{j}a^{k-1-j}(-b)^j.\nonumber
\end{split}
 \end{equation}
Hence,
$$\int_{\C}\frac{(\zeta-b)^{k-1}}{\zeta-a}(\frac{1}{\zeta})^ld\zeta=2\pi i\sum\limits_{j=0}^{k-1-l}\binom{k-1}{j}a^{k-1-j}(-b)^j,$$
and (\ref{eq:17}) can be obtained.

On the other hand, by Cauchy integral, we can obtain $I_2$ as follows,
  \begin{equation} \label{eq:21}
\begin{split}
I_2=&\int_{\C}\frac{(\zeta-b)^{k-1}\ln(1-\frac{\bb}{\zetab})}{\zetab-a}d\zeta\\
   =&\int_{\C}\frac{(\zeta-b)^{k-1}}{\zeta-a}\sum\limits_{l=1}^{\infty}\frac{-1}{l}(\frac{\bb}{\zetab})^ld\zeta\\
   =&\int_{\C}\frac{(\zeta-b)^{k-1}}{\zeta-a}\sum\limits_{l=1}^{\infty}\frac{-1}{l}(\frac{\bb}{R^2})^l\zeta^ld\zeta\\
   =&\sum\limits_{l=1}^{\infty}\frac{-1}{l}(\frac{\bb}{R^2})^l\int_{\C}\frac{(\zeta-b)^{k-1}}{\zeta-a}\zeta^ld\zeta\\
   =&2\pi i\sum\limits_{l=1}^{\infty}\frac{-1}{l}(\frac{\bb}{R^2})^l(a-b)^{k-1}a^l\\
   =&2\pi i(a-b)^{k-1}\ln(1-\frac{a\bb}{R^2}).
\end{split}
 \end{equation}
Therefore, from (\ref{eq:15}), (\ref{eq:17}), and (\ref{eq:21}), we prove that
 \begin{equation} \label{eq:22}
\begin{split}
\int_{\C}&\frac{(\zeta-b)^{k-1}\ln |\zeta-b|^2}{\zeta-a}d\zeta\\
=&2\pi i\sum\limits_{l=1}^{k-1}(\frac{-b^l}{l})\Big(\sum\limits_{j=0}^{k-1-l}\binom{k-1}{j}a^{k-1-j}(-b)^j\Big)+2\pi i(a-b)^{k-1}\ln(R^2-{a\bb}).\nonumber
\end{split}
 \end{equation}
 For $k=1$, one can easily yield from the above proof that $I_1=0$.
Combining (\ref{eq:15}), we get the lemma.
\end{proof}

\begin{lem}\label{lem5}
For any $a,~b\in \D$ and $a\neq b$, $l>0,~\nu>0$,
\begin{equation} \label{eq:23}
\begin{split}
\int_\C\frac{(\zetab-\bb)^l(\zeta-b)^{\nu-1}}{\zeta-a} d\zeta=2\pi iC_2(a,b,l,\nu),\nonumber
 \end{split}
 \end{equation}
 where $C_2(a,b,l,\nu)=\sum\limits_{0\leq p\leq l,0\leq q\leq \nu-1,p\leq q}\binom{l}{p}\binom{\nu-1}{q}R^{2p}(-\bb)^{l-p}(-b)^{\nu-1-q}a^{q-p}$.
\end{lem}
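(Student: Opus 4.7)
The plan is to reduce the line integral on $\C$ to a sum of elementary contour integrals of the form $\int_\C \zeta^{q-p}/(\zeta-a)\,d\zeta$, which can then be evaluated by the residue theorem.

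The first step is to exploit that $|\zeta|^2 = R^2$ on $\C$, so that $\zetab = R^2/\zeta$. This gives the key identity
\[
\zetab - \bb = \frac{R^2 - \bb\zeta}{\zeta},
\qquad\text{hence}\qquad
(\zetab - \bb)^l = \frac{(R^2 - \bb\zeta)^l}{\zeta^l}.
\]
Expanding by the binomial theorem,
\[
(R^2 - \bb\zeta)^l = \sum_{p=0}^{l}\binom{l}{p} R^{2p}(-\bb)^{l-p}\zeta^{l-p},
\qquad
(\zeta - b)^{\nu-1} = \sum_{q=0}^{\nu-1}\binom{\nu-1}{q}(-b)^{\nu-1-q}\zeta^{q}.
\]
Multiplying and dividing by $\zeta^l$, the integrand becomes
\[
\frac{(\zetab-\bb)^l (\zeta-b)^{\nu-1}}{\zeta-a}
= \sum_{p=0}^{l}\sum_{q=0}^{\nu-1}\binom{l}{p}\binom{\nu-1}{q} R^{2p}(-\bb)^{l-p}(-b)^{\nu-1-q}\,\frac{\zeta^{q-p}}{\zeta - a}.
\]

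The second step is to evaluate $\int_\C \zeta^{q-p}/(\zeta-a)\,d\zeta$. When $q \geq p$ the integrand is holomorphic in a neighborhood of $\Dn\setminus\{a\}$ with a simple pole only at $\zeta=a$ (which lies inside $\C$ since $a \in \D$), so the residue theorem gives $2\pi i\, a^{q-p}$. When $q < p$, writing $m := p-q > 0$, the integrand $1/(\zeta^{m}(\zeta-a))$ has poles at $0$ and $a$; since at infinity it decays like $|\zeta|^{-m-1}$ with $m+1 \geq 2$, the residue at $\infty$ vanishes, forcing the sum of the residues at $0$ and $a$ to be zero. Hence the integral is $0$ in this case. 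Consequently only the terms with $p \leq q$ survive, each contributing $2\pi i \, a^{q-p}$.

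Combining these, the whole sum collapses to
\[
2\pi i \sum_{\substack{0\leq p\leq l,\ 0\leq q\leq \nu-1 \\ p\leq q}}\binom{l}{p}\binom{\nu-1}{q} R^{2p}(-\bb)^{l-p}(-b)^{\nu-1-q}\,a^{q-p} = 2\pi i\, C_2(a,b,l,\nu),
\]
which is the claimed formula. There is no real obstacle here: the only point that needs care is the sign/cancellation argument for $q<p$, which is cleanly handled by observing that the function decays at infinity and that the total residue on the Riemann sphere vanishes. The appearance of $a\neq b$ in the hypothesis is not actually used in this argument (the singularity at $b$ on $\C$ never occurs since $b \in \Int(\D)$), but it is inherited from the context of Lemma \ref{lem4} where it is essential.
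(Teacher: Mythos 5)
Your proof is correct and follows essentially the same route as the paper: substitute $\zetab=R^2/\zeta$ on $\C$, expand both factors binomially, and evaluate the resulting elementary integrals $\int_\C \zeta^{q-p}(\zeta-a)^{-1}d\zeta$ by residues, with the terms $q<p$ vanishing. The only cosmetic differences are that you justify the vanishing for $q<p$ via the residue at infinity (the paper simply asserts it) and that you wrote $\Dn$ where you meant $\D$.
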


\begin{proof}
The computation is based on Residue theorem.
\begin{equation} \label{eq:24}
\begin{split}
\int_\C&\frac{(\zetab-\bb)^l(\zeta-b)^{\nu-1}}{\zeta-a} d\zeta\\
      =&\int_\C\frac{1}{\zeta-a}\Big( \sum\limits_{p=0}^{l}\binom{l}{p}\zetab^p(-\bb)^{l-p}\Big)\Big( \sum\limits_{q=0}^{\nu-1}\binom{\nu-1}{q}\zeta^q(-b)^{\nu-1-q}\Big) d\zeta\\
      =&\int_\C\frac{1}{\zeta-a}\Big( \sum\limits_{p=0}^{l}\binom{l}{p}\frac{R^{2p}}{\zeta^p}(-\bb)^{l-p}\Big)\Big( \sum\limits_{q=0}^{\nu-1}\binom{\nu-1}{q}\zeta^q(-b)^{\nu-1-q}\Big) d\zeta\\
      =&\int_\C\frac{1}{\zeta-a}\Big( \sum\limits_{p=0}^{l}\sum\limits_{q=0}^{\nu-1}\binom{l}{p}\binom{\nu-1}{q}R^{2p}(-\bb)^{l-p}(-b)^{\nu-1-q} \zeta^{q-p}\Big) d\zeta\\
      =& \sum\limits_{p=0}^{l}\sum\limits_{q=0}^{\nu-1}\binom{l}{p}\binom{\nu-1}{q}R^{2p}(-\bb)^{l-p}(-b)^{\nu-1-q}\int_\C\frac{1}{\zeta-a}\zeta^{q-p} d\zeta\\
      =&2\pi i \sum\limits_{0\leq p\leq l,0\leq q\leq \nu-1,p\leq q}\binom{l}{p}\binom{\nu-1}{q}R^{2p}(-\bb)^{l-p}(-b)^{\nu-1-q}a^{q-p},
 \end{split}
 \end{equation}
since if $q-p<0$, $\int_\C\frac{1}{\zeta-a}\zeta^{q-p} d\zeta=0$ and if $q-p\geq 0$, from the Residue theorem, $\int_\C\frac{1}{\zeta-a}\zeta^{q-p} d\zeta=2\pi i a^{q-p}$.

\end{proof}

\begin{lem}\label{lem6}
For any $a,~b\in \D$ and $a\neq b$, $\mu,~\nu>0$,
\begin{equation} \label{eq:26}
\begin{split}
\int_\D\frac{(\zetab-\ab)^{\mu-1}(\zeta-b)^{\nu-1}d\zetab \wedge d\zeta}{(\zeta-a)(\zetab-\bb)}=2\pi i C_3(a,b,\mu,\nu),\nonumber
 \end{split}
 \end{equation}
 where $C_3(a,b,\mu,\nu)=(\bb-\ab)^{\mu-1}\Big( C_1(a,b,\nu)+(a-b)^{\nu-1}\ln \frac{R^2-a\bb}{|a-b|^2}\Big)+\sum\limits_{l=1}^{\mu-1}
 \binom{\mu-1}{l}(\bb-\ab)^{\mu-1-l}\Big(\frac{-1}{l}(\ab-\bb)^{l}(a-b)^{\nu-1}-\frac{1}{l}C_2(a,b,l,\nu)\Big)$ for $\mu>1$ and $C_3(a,b,1,\nu)=C_1(a,b,\nu)+(a-b)^{\nu-1}\ln \frac{R^2-a\bb}{|a-b|^2}$.
\end{lem}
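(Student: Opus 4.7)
The plan is to reduce the new double integral to the already-established Lemmas \ref{lem4} and \ref{lem5} by expanding $(\zetab-\ab)^{\mu-1}$ around the singular point $\zetab=\bb$. Specifically, I would write by the binomial theorem
\[
(\zetab-\ab)^{\mu-1}=\bigl((\zetab-\bb)+(\bb-\ab)\bigr)^{\mu-1}=\sum_{l=0}^{\mu-1}\binom{\mu-1}{l}(\zetab-\bb)^l(\bb-\ab)^{\mu-1-l},
\]
divide by the factor $(\zetab-\bb)$ sitting in the denominator, and split the resulting integral into the term $l=0$ (which keeps the singularity $1/(\zetab-\bb)$) and the terms $l\geq 1$ (for which the singularity cancels).

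The $l=0$ contribution is exactly $(\bb-\ab)^{\mu-1}$ times the integral computed in Lemma \ref{lem4} with $k=\nu$, producing the piece $(\bb-\ab)^{\mu-1}\bigl(C_1(a,b,\nu)+(a-b)^{\nu-1}\ln\frac{R^2-a\bb}{|a-b|^2}\bigr)$. For each $l\geq 1$ I must evaluate
\[
J_l:=\int_\D\frac{(\zetab-\bb)^{l-1}(\zeta-b)^{\nu-1}}{\zeta-a}\,d\zetab\wedge d\zeta.
\]
The key observation is that the integrand is an antiholomorphic derivative in $\zeta$: setting $g_l(\zeta)=\frac{(\zetab-\bb)^l(\zeta-b)^{\nu-1}}{l}$, we have $\pb g_l=(\zetab-\bb)^{l-1}(\zeta-b)^{\nu-1}$ because $(\zeta-b)^{\nu-1}$ is holomorphic. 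Applying the Cauchy--Green identity $g_l=Sg_l+T\pb g_l$ from Lemma \ref{lem0} and unwinding the definition of $T$ gives
\[
J_l=-2\pi i\,g_l(a)+2\pi i\,Sg_l(a)=\frac{-2\pi i}{l}(\ab-\bb)^l(a-b)^{\nu-1}+\frac{1}{l}\!\int_\C\!\frac{(\zetab-\bb)^l(\zeta-b)^{\nu-1}}{\zeta-a}\,d\zeta,
\]
and the remaining boundary integral is precisely $2\pi i\,C_2(a,b,l,\nu)$ by Lemma \ref{lem5}.

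Assembling the pieces, multiplying each $J_l$ by $\binom{\mu-1}{l}(\bb-\ab)^{\mu-1-l}$ and summing $l=1,\dots,\mu-1$, then adding the $l=0$ contribution, yields the claimed formula for $C_3(a,b,\mu,\nu)$. The case $\mu=1$ is immediate since only the $l=0$ term is present. I do not expect any genuine difficulty: Lemma \ref{lem4} handles the one singular piece, Lemma \ref{lem5} handles the boundary part of every regular piece, and the Cauchy--Green identity (Lemma \ref{lem0}) provides the bridge between them. The only real care needed is bookkeeping the signs and the constant $(\bb-\ab)^{\mu-1-l}$ versus $(\ab-\bb)^l$ coming from the binomial expansion, so I would keep the two factorizations consistent throughout to avoid sign errors.
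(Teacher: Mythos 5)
Your proposal follows the paper's proof essentially verbatim: the same binomial expansion of $(\zetab-\ab)^{\mu-1}$ about $\bb$, the $l=0$ term reduced to Lemma \ref{lem4} with $k=\nu$, and each $l\ge 1$ term handled by writing the integrand as $\pb g_l$ and invoking $T\pb g_l=g_l-Sg_l$ together with Lemma \ref{lem5} for the boundary piece. The one point to settle is a sign: your (correctly derived) expression $J_l=\frac{-2\pi i}{l}(\ab-\bb)^l(a-b)^{\nu-1}+\frac{1}{l}\int_\C\frac{(\zetab-\bb)^l(\zeta-b)^{\nu-1}}{\zeta-a}\,d\zeta$ carries $+\frac{2\pi i}{l}C_2(a,b,l,\nu)$, whereas the stated $C_3$ has $-\frac{1}{l}C_2(a,b,l,\nu)$, so ``assembling the pieces'' does not literally yield the claimed formula; note that the same discrepancy appears in the paper's own last step, where $-2\pi i\big(f(a)-Sf(a)\big)$ is rewritten with a $-Sf(a)$ term instead of $+Sf(a)$, so you should reconcile this rather than assert agreement.
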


\begin{proof} For $\mu>1$, we have
\begin{equation} \label{eq:27}
\begin{split}
\int_\D&\frac{(\zetab-\ab)^{\mu-1}(\zeta-b)^{\nu-1}d\zetab \wedge d\zeta}{(\zeta-a)(\zetab-\bb)}\\
      =&\int_\D\frac{(\zetab-\bb+\bb-\ab)^{\mu-1}(\zeta-b)^{\nu-1}d\zetab \wedge d\zeta}{(\zeta-a)(\zetab-\bb)}\\
      =&\int_\D\frac{\sum\limits_{l=0}^{\mu-1}\binom{\mu-1}{l}(\zetab-\bb)^l(\bb-\ab)^{\mu-1-l}(\zeta-b)^{\nu-1}d\zetab \wedge d\zeta}{(\zeta-a)(\zetab-\bb)}\\
      =&(\bb-\ab)^{\mu-1}\int_\D\frac{(\zeta-b)^{\nu-1}d\zetab \wedge d\zeta}{(\zeta-a)(\zetab-\bb)}\\
      &+\sum\limits_{l=1}^{\mu-1}\binom{\mu-1}{l}(\bb-\ab)^{\mu-1-l}\int_\D\frac{(\zetab-\bb)^l(\zeta-b)^{\nu-1}d\zetab \wedge d\zeta}{(\zeta-a)(\zetab-\bb)}\\
      =&(\bb-\ab)^{\mu-1}\int_\D\frac{(\zeta-b)^{\nu-1}d\zetab \wedge d\zeta}{(\zeta-a)(\zetab-\bb)}\\
      &+\sum\limits_{l=1}^{\mu-1}\binom{\mu-1}{l}(\bb-\ab)^{\mu-1-l}\int_\D\frac{(\zetab-\bb)^{l-1}(\zeta-b)^{\nu-1}d\zetab \wedge d\zeta}{\zeta-a}\\
      =&I_3+I_4.\nonumber
 \end{split}
 \end{equation}
 From Lemma \ref{lem4}, $$I_3=2\pi i(\bb-\ab)^{\mu-1}\Big( C_1(a,b,\nu)+(a-b)^{\nu-1}\ln \frac{R^2-a\bb}{|a-b|^2}\Big).$$
For $f(\zeta)=\frac{1}{l}(\zetab-\bb)^{l}(\zeta-b)^{\nu-1}$, $\pb f(\zeta)=(\zetab-\bb)^{l-1}(\zeta-b)^{\nu-1}$. From Lemma \ref{lem1},
\begin{equation} \label{eq:28}
\begin{split}
I_4=&\sum\limits_{l=1}^{\mu-1}\binom{\mu-1}{l}(\bb-\ab)^{\mu-1-l}\int_\D\frac{(\zetab-\bb)^{l-1}(\zeta-b)^{\nu-1}d\zetab \wedge d\zeta}{\zeta-a}\\
      =&\sum\limits_{l=1}^{\mu-1}\binom{\mu-1}{l}(\bb-\ab)^{\mu-1-l}\int_\D\frac{\pb f(\zeta)}{\zeta-a}d\zetab \wedge d\zeta\\
      =&-2\pi i\sum\limits_{l=1}^{\mu-1}\binom{\mu-1}{l}(\bb-\ab)^{\mu-1-l}T(\pb f(\zeta))(a)\\
      =&-2\pi i\sum\limits_{l=1}^{\mu-1}\binom{\mu-1}{l}(\bb-\ab)^{\mu-1-l}\Big(f(a)-S(f(\zeta)(a))\Big)\\
      =&2\pi i\sum\limits_{l=1}^{\mu-1}\binom{\mu-1}{l}(\bb-\ab)^{\mu-1-l}\Big(\frac{-1}{l}(\ab-\bb)^{l}(a-b)^{\nu-1}-\frac{1}{l\cdot 2\pi i}\int_\C\frac{(\zetab-\bb)^l(\zeta-b)^{\nu-1}}{\zeta-a} d\zeta\Big)\\
      =&2\pi i\sum\limits_{l=1}^{\mu-1}\binom{\mu-1}{l}(\bb-\ab)^{\mu-1-l}\Big(\frac{-1}{l}(\ab-\bb)^{l}(a-b)^{\nu-1}-\frac{1}{l}C_2(a,b,l,\nu)\Big),\nonumber
 \end{split}
 \end{equation}
 where the last equation comes from Lemma \ref{lem5}. Furthermore, it is easy to see $I_4=0$ for $\mu=1.$
 Let $C_3(a,b,\mu,\nu)=(\bb-\ab)^{\mu-1}\Big( C_1(a,b,\nu)+(a-b)^{\nu-1}\ln \frac{R^2-a\bb}{|a-b|^2}\Big)+\sum\limits_{l=1}^{\mu-1}\binom{\mu-1}{l}(\bb-\ab)^{\mu-1-l}\Big(\frac{-1}{l}(\ab-\bb)^{l}(a-b)^{\nu-1}-\frac{1}{l}C_2(a,b,l,\nu)\Big)$. Then we prove the lemma.
\end{proof}

\begin{thm}\label{thm1}
Given $f\in C^\alpha(\D)$, $T^\mu\Tb^\nu f\in C^{\mu+\nu+\alpha}(\D)$ with $\mu,\nu>0$, and
\begin{equation}
\begin{split}
T^\mu\Tb^\nu f(z)=\frac{(-1)^{\mu}}{(\mu-1)!(\nu-1)!\cdot 2\pi i}\int_\D C_3(z,\eta,\mu,\nu) f(\eta)d\etab \wedge d\eta,\nonumber
 \end{split}
 \end{equation}
where the kernel function $C_3$ is given by Lemma \ref{lem6}.

\end{thm}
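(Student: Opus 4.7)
The plan is to reduce the iterated integral $T^\mu \Tb^\nu f$ to a single integral by using Theorem \ref{thm0} twice (once for $T^\mu$, once for $\Tb^\nu$), swapping the order of integration, and recognizing the resulting inner integral as exactly the kernel evaluated in Lemma \ref{lem6}.

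First I would establish the regularity assertion $T^\mu \Tb^\nu f \in C^{\mu+\nu+\alpha}(\D)$. By Lemma \ref{lem1} and the parallel smoothness statement for $\Tb$ noted just after it, one inductive step gains one derivative with H\"older exponent preserved: $f\in C^\alpha(\D)$ implies $\Tb f\in C^{1+\alpha}(\D)$, and iterating $\nu$ times gives $\Tb^\nu f\in C^{\nu+\alpha}(\D)$; a further $\mu$ applications of $T$ then yields $T^\mu\Tb^\nu f\in C^{\mu+\nu+\alpha}(\D)$.

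For the explicit formula, I would insert the two expressions from Theorem \ref{thm0}:
\begin{equation*}
T^\mu \Tb^\nu f(z) = \frac{(-1)^{\mu+\nu}}{(\mu-1)!(\nu-1)!\,(2\pi i)^2}
\int_\D \!\!\int_\D \frac{(\zetab-\zb)^{\mu-1}(\eta-\zeta)^{\nu-1}}{(\zeta-z)(\etab-\zetab)}\, f(\eta)\,
d\etab\wedge d\eta\, d\zetab\wedge d\zeta,
\end{equation*}
and then apply Fubini's theorem to put the $\zeta$-integral inside. The kernel $1/((\zeta-z)(\etab-\zetab))$ has only integrable singularities on $\D$ (of $L^p$ type near $\zeta=z$ and $\zeta=\eta$), so the integrability required for Fubini can be checked directly by a standard polar coordinates estimate near each singular point, as already used implicitly in the proof of Theorem \ref{thm0}. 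Pulling out the signs $(\eta-\zeta)^{\nu-1}=(-1)^{\nu-1}(\zeta-\eta)^{\nu-1}$ and $1/(\etab-\zetab)=-1/(\zetab-\etab)$, the inner integral becomes
\begin{equation*}
(-1)^{\nu}\int_\D \frac{(\zetab-\zb)^{\mu-1}(\zeta-\eta)^{\nu-1}}{(\zeta-z)(\zetab-\etab)}\,d\zetab\wedge d\zeta,
\end{equation*}
which is precisely the integral computed in Lemma \ref{lem6} with $a=z$ and $b=\eta$, hence equal to $(-1)^\nu\cdot 2\pi i\, C_3(z,\eta,\mu,\nu)$.

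Substituting back collapses the constants: the surviving sign is $(-1)^{\mu+\nu}\cdot(-1)^\nu=(-1)^\mu$ and one factor of $2\pi i$ cancels, giving the announced formula. The one step that needs care is the Fubini interchange, since the two-variable kernel has mild (logarithmic/integrable) singularities at $\zeta=z$, $\zeta=\eta$, and along the diagonal $\zeta=\eta$; I would handle this by the same $\varepsilon$-deletion argument used in Lemma \ref{lem4}, removing small disks $\{|\zeta-z|<\varepsilon\}$ and $\{|\zeta-\eta|<\varepsilon\}$, applying Fubini on the bounded complement, and then passing to the limit using the uniform boundedness of the truncated inner integrals. All other steps are symbolic manipulations that plug straight into Lemma \ref{lem6}.
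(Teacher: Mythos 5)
Your proposal is correct and follows essentially the same route as the paper: insert the two formulas from Theorem \ref{thm0}, interchange the order of integration, adjust signs, and identify the inner integral with Lemma \ref{lem6} at $a=z$, $b=\eta$. The only difference is that you explicitly justify the Fubini interchange, which the paper performs without comment.
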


\begin{proof}

Given $f\in C^\alpha(\D)$, it is obvious that $T^\mu\Tb^\nu f\in C^{\mu+\nu+\alpha}(\D)$ by Lemma \ref{lem1}. From Theorem \ref{thm0}, we have

\begin{equation}
\begin{split}
T^\mu\Tb^\nu f(z)=&\frac{(-1)^\mu}{(\mu-1)!\cdot 2\pi i}\int_\D\frac{(\zetab-\zb)^{\mu-1}\Tb^\nu f(\zeta)}{\zeta-z}d\zetab \wedge d\zeta\\
                      =&\frac{(-1)^\mu}{(\mu-1)!\cdot 2\pi i}\int_\D\frac{(\zetab-\zb)^{\mu-1}\frac{(-1)^\nu}{(\nu-1)!\cdot 2\pi i}\int_\D\frac{(\eta-\zeta)^{\nu-1}f(\eta)}{\etab-\zetab}d\etab \wedge d\eta}{\zeta-z}d\zetab \wedge d\zeta\\
                      =&\frac{(-1)^{\mu+\nu}}{(\mu-1)!(\nu-1)!\cdot (2\pi i)^2}\int_\D\int_\D\frac{(\zetab-\zb)^{\mu-1}(\eta-\zeta)^{\nu-1} d\zetab \wedge d\zeta}{(\zeta-z)(\etab-\zetab)}f(\eta)d\etab \wedge d\eta\\
                      =&\frac{(-1)^{\mu+\nu}(-1)^\nu}{(\mu-1)!(\nu-1)!\cdot (2\pi i)^2}\int_\D\int_\D\frac{(\zetab-\zb)^{\mu-1}(\zeta-\eta)^{\nu-1} d\zetab \wedge d\zeta}{(\zeta-z)(\zetab-\etab)}f(\eta)d\etab \wedge d\eta\\
                      =&\frac{(-1)^{\mu}}{(\mu-1)!(\nu-1)!\cdot 2\pi i}\int_\D C_3(z,\eta,\mu,\nu) f(\eta)d\etab \wedge d\eta.\nonumber
\end{split}
 \end{equation}
\end{proof}

\begin{rem}
From above theorem, we have $\Tb^\mu T^\nu \bar{f}(z)=\overline{T^\mu\Tb^\nu {f}(z)}$.
\end{rem}

\begin{rem}
Theorem \ref{thm1} presents the expression of high-order Green operator and its kernel function on the disk. Here we give some special cases to shown the kernel in detail.

\begin{enumerate}
  \item$C_3(z,\eta,1,1)$=$\ln \frac{R^2-z\etab}{|z-\eta|^2}$;
  \item$C_3(z,\eta,1,2)$=$-z\eta+(z-\eta)\ln \frac{R^2-z\etab}{|z-\eta|^2}$;
  \item$C_3(z,\eta,2,1)$=$(\etab-\zb)\ln \frac{R^2-z\etab}{|z-\eta|^2}+2\etab-\zb$;
  \item$C_3(z,\eta,2,2)$=$(\etab-\zb)\big(-z\eta+(z-\eta)\ln \frac{R^2-z\etab}{|z-\eta|^2}\big)-|z-\eta|^2-|\eta|^2+\etab z-R^2$.
\end{enumerate}

\end{rem}

\cite{p} has presented the relationship between the corresponding norms for $f$ and $T^\mu\Tb^\nu f$ in the above theorem. We omit the proof here.
\begin{prop}\cite{p}
If $f\in C^\alpha(\D)$ and $\mu+\nu=m$, then
$$||T^\mu\Tb^\nu f||^{(m)}\leq 2^{\frac{(m-1)m}{2}}\big(C_4m+C_0+(m-1)C_5\big)^m||f||,$$
where $C_0=\frac{12}{\alpha(1-\alpha)}$, $C_4=\frac{2^{\alpha+1}}{\alpha}$ and $C_5=\frac{4}{\alpha(1-\alpha)}$.
\end{prop}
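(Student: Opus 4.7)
The plan is to induct on $m = \mu + \nu$, reducing to a single-step bound for $T$ (the argument for $\Tb$ is symmetric): for any $g \in C^{k+\alpha}(\D)$ with $k \geq 0$,
\begin{equation*}
\|Tg\|^{(k+1)} \leq 2^k\bigl(C_4(k+1) + C_0 + k C_5\bigr)\|g\|^{(k)}.
\end{equation*}
Decomposing $T^\mu\Tb^\nu f$ into $m$ single applications of $T$ or $\Tb$ and iterating this bound yields
\begin{equation*}
\|T^\mu\Tb^\nu f\|^{(m)} \leq \prod_{k=0}^{m-1} 2^k(C_4(k+1)+C_0+kC_5)\|f\| \leq 2^{m(m-1)/2}(C_4 m+C_0+(m-1)C_5)^m\|f\|,
\end{equation*}
which is the claimed estimate.

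The base case ($k=0$), $\|Tg\|^{(1)} \leq (C_4+C_0)\|g\|$, follows from Lemma \ref{lem1}: $\pb Tg = g$ gives $\|\pb Tg\| \leq \|g\|$, while $\p Tg = {}^2Tg$ together with $H_\alpha[{}^2Tg] \leq C_0 H_\alpha[g]$ and the pointwise estimate $|{}^2Tg(z)| \leq \frac{2}{\alpha}(2R)^\alpha H_\alpha[g] \leq C_4\|g\|$ (obtained by polar coordinates around $z$, bounding $\int_\D |\zeta-z|^{\alpha-2}\,dA \leq 2\pi(2R)^\alpha/\alpha$) give $\|{}^2Tg\| \leq (C_4+C_0)\|g\|$.

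For general $k \geq 0$, expand $\|Tg\|^{(k+1)} = \max_{i+j=k+1}\|\p^i \pb^j Tg\|$. If $j \geq 1$, Lemma \ref{lem1} gives $\p^i\pb^j Tg = \p^i\pb^{j-1}g$, of total order $k$, with norm at most $\|g\|^{(k)}$. If $j=0$ (so $i=k+1$), I would derive the commutator identity $[\p, T]h = -S_b h$ from Stokes' theorem applied to the defining integral of $T$ on $\D$ minus a small disk around $\zeta=z$, and then iterate to obtain
\begin{equation*}
\p^{k+1}Tg = {}^2T(\p^k g) - \sum_{l=0}^{k-1}\p^{k-l}\bigl(S_b \p^l g\bigr).
\end{equation*}
The first summand is bounded by $(C_4+C_0)\|g\|^{(k)}$ by applying the base-case argument to $\p^k g$. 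Each boundary term $\p^{k-l}(S_b \p^l g)$ is controlled by $C_5\|g\|^{(k)}$ using the explicit kernel $(k-l)!/(\zeta-z)^{k-l+1}$ obtained by differentiating the Cauchy-type integral $S_b$ and a boundary H\"older estimate. The factor $2^k$ arises from combinatorial counting in the iterated commutator expansion together with the maximum over $(i,j)$.

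The main obstacle is the pure-derivative case $j=0$. Establishing the commutator $[\p,T] = -S_b$ rigorously requires careful integration by parts on $\D$ minus a shrinking disk around $\zeta=z$, using the relation $\p u\,d\bar\zeta \wedge d\zeta = -d(u\,d\bar\zeta)$ to convert the interior integral into a boundary one and taking the limit as the puncture shrinks. Bounding $\p^{k-l}(S_b \p^l g)$ uniformly in $z \in \D$ up to $\C$ is delicate since $|\zeta-z|$ can be small there, and one must transfer the interior H\"older regularity of $\p^l g$ to a useable boundary bound. Finally, tracking the constants so that the combinatorial factor is exactly $2^k$ at each step is a careful bookkeeping exercise rather than a conceptual difficulty.
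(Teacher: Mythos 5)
First, note that the paper does not actually prove this proposition: it is quoted from reference \cite{p} with the remark ``We omit the proof here'', so there is no in-paper argument to compare yours against. That said, your strategy --- induct on $m$ through a single-step estimate $\|Tg\|^{(k+1)}\le 2^k\big(C_4(k+1)+C_0+kC_5\big)\|g\|^{(k)}$, dispose of every mixed derivative containing a $\pb$ via $\pb Tg=g$, and reduce the pure $\p^{k+1}$-derivative to ${}^2T(\p^kg)$ plus boundary Cauchy-type terms through a commutator identity --- is the natural one, it is consistent with the provenance of the three constants ($C_4$ from the sup bound of ${}^2T$, $C_0$ from Lemma \ref{lem1}, $C_5$ of Plemelj--Privalov type from $S_b$), and the closing arithmetic $\prod_{k=0}^{m-1}2^k\big(C_4(k+1)+C_0+kC_5\big)\le 2^{m(m-1)/2}\big(C_4m+C_0+(m-1)C_5\big)^m$ is correct.

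Two points in your sketch are genuine gaps rather than bookkeeping. First, integration by parts does not give $[\p,T]h=-S_bh$ directly; it gives $\p Th={}^2Th=T\p h-S_bh+h\cdot S_b\1$, and you need the disk-specific fact that $S_b\1\equiv 0$ on $\Int(\D)$ (compute $\frac{1}{2\pi i}\int_\C\frac{d\zetab}{\zeta-z}$ using $\zetab=R^2/\zeta$ on $\C$ and residues at $0$ and $z$, which cancel) for your commutator to be exact; on a general domain this term survives. Second, and more seriously, the estimate $\|\p^{k-l}(S_b\p^lg)\|\le C_5\|g\|^{(k)}$ is asserted at precisely the point where the difficulty lives. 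The kernel of $\p_z^{k-l}S_b$ is $(k-l)!/(\zeta-z)^{k-l+1}$, and to tame it you must integrate by parts $k-l$ times along $\C$, transferring derivatives onto the density; the tangential derivative on $\C$ is a $\zeta$-dependent combination of $\p$ and $\pb$, so Leibniz inevitably produces derivatives of $g$ of total order strictly less than $k$. These are \emph{not} controlled by the seminorm $\|g\|^{(k)}$, which vanishes on polynomials of degree $k-1$. To close the argument one must exploit that $\p^{k+1}Tg$ is unchanged when such a polynomial is subtracted from $g$, and then bound the lower-order derivatives of the resulting Taylor remainder by powers of $2R$ times the H\"older seminorms of the top-order derivatives --- which is also, most likely, where the factor $2^k$ you leave unexplained actually originates. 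As written, your proposal is a correct skeleton with the hardest estimate left as an assertion.
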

\section{High-order Green operator on $\Dn$}

Let $\Dn$ be the $n$-fold cartesian product of $D$, which is a closed ploydisc in $\mathbb{C}^n$ with radius $R$.
Suppose that $f$ is a complex-valued function defined on $\Dn$. We define $\Delta_if$ as a function on that subset $D_i$ of the $(n+1)-$fold product
$\D\times\cdots\times\D$ whose points $(z_1,...,z_{i-1},(z_i,z'_i),z_{i+1},...,z_n)$ satisfy $z_i\neq z'_i$. Then
$\Delta_if(z_1,...,z_{i-1},(z_i,z'_i),z_{i+1},...,z_n):=f(z_1,...,z_{i-1},z_i,z_{i+1},...,z_n)-f(z_1,...,z_{i-1},z'_i,z_{i+1},...,z_n)$.
For any distinct integers $i_1,...,i_k\in \{1,...,n\}$, we define $\Delta_{i_1\cdots i_k}f:=\Delta_{i_k}\Delta_{i_1\cdots i_{k-1}}f$. For simplicity, denote $Z_{i_1\cdots i_k}=(z_1,...,(z_{i_1},z'_{i_1}),\\...,(z_{i_k},z'_{i_k}),...,z_n)$ satisfy $z_{i_j}\neq z'_{i_j}$, $j=1,...,k.$ A $k$th-order $\alpha-$H\"older difference quotient is any expression $\delta_{i_1\cdots i_k}f$ defined by
$$\delta_{i_1\cdots i_k}f(Z_{i_1\cdots i_k})=\frac{\Delta_{i_1\cdots i_k}f(Z_{i_1\cdots i_k})}{|z_{i_1}-z'_{i_1}|^\alpha\cdots |z_{i_k}-z'_{i_k}|^\alpha}.$$
Set
$$H^{(k)}_\alpha[f]=\max\{|\delta_{i_1\cdots i_k}f|\big|i_1,...,i_k ~\mbox{distinct}\}.$$
We define $\mathcal{C}^\alpha(\Dn)$ as the set of those functions $f$ defined on $\Dn$ for which that $H^{(k)}_\alpha[f]$ are finite for all $k=0,...,n,$ where $H^{(0)}_\alpha[f]=|f|$.

Define $||\cdot||$ on the space $\mathcal{C}^\alpha(\Dn)$ by
$$||f||:=\sum_{k=0}^{n}\frac{{2R}^{k\alpha}}{k!}H^{(k)}_\alpha[f].$$
It is proved in 7.1b of \cite{nw} that $||\cdot||$ defined on $\mathcal{C}^\alpha(\Dn)$ is a norm.

Let $z=(z_1,...,z_n)\in \Dn$, the following operators are defined on $\Dn$:
\begin{equation} \label{}
\begin{split}
T_jf(z)&=\frac{-1}{2\pi i}\int_\D\frac{f(z_1,...,z_{j-1},\zeta,z_{j+1},...,z_n)d\zetab \wedge d\zeta}{\zeta-z_j},\\
\Tb_j f(z)&=\frac{-1}{2\pi i}\int_\D\frac{f(z_1,...,z_{j-1},\zeta,z_{j+1},...,z_n)d\zetab \wedge d\zeta}{\zetab-\zb_j},\nonumber
 \end{split}
 \end{equation}
 similar definitions are given for $S_j,~\Sb_j$. From Lemma \ref{lem0}, it is easy to see that for any $f\in C^1(\Dn)$,
$$T_j\pb_jf=f-S_jf.$$

Furthermore, it is given in \cite{nw} that
\begin{lem}\label{lem7}
If $f\in \mathcal{C}^\alpha(\Dn)$, then $T^if\in \mathcal{C}^\alpha(\Dn)$, $S^if\in\mathcal{C}^\alpha(\Dn)$ for all $i=1,...,n$. And there exist constants $C_6,~C_7$ such that
$$||T^if||\leq C_6R||f||,~~~~~~~||S^if||\leq C_7||f||.$$
\end{lem}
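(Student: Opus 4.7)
My plan is to reduce the polydisc estimate to iterated one-variable bounds by treating all coordinates other than the one being integrated as parameters. For a fixed distinguished index $i$ and any multi-index $(j_1,\ldots,j_k)$ of distinct indices defining a $k$th-order H\"older-difference quotient on $\Dn$, I would split into two cases according to whether $i \in \{j_1,\ldots,j_k\}$. Before doing so, I would record the basic sup-norm bound $|T_i g| \leq 4R|g|$, which follows from $|Tg(w)| \leq \frac{1}{\pi}|g|\int_\D|\zeta-w|^{-1}\,dA(\zeta)$ via polar coordinates centred at $w$.

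Case A ($i \notin \{j_1,\ldots,j_k\}$). The differences $\Delta_{j_1\cdots j_k}$ act in variables disjoint from $z_i$, so by Fubini they commute with the $z_i$-integration defining $T_i$, giving
\[
\delta_{j_1\cdots j_k}(T_i f) \;=\; T_i\bigl(\delta_{j_1\cdots j_k} f\bigr).
\]
Applying the sup-norm bound yields $|\delta_{j_1\cdots j_k}(T_i f)| \leq 4R\, H^{(k)}_\alpha[f]$.

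Case B ($i \in \{j_1,\ldots,j_k\}$, say $j_1=i$). Here the $z_i$-difference is precisely what Lemma~\ref{lem1} controls. Apply it to the parametrised function $g(\zeta) = \Delta_{j_2\cdots j_k} f(\ldots,\zeta,\ldots)$ and use $\bar\partial Tg = g$, $\partial Tg = {}^2Tg$, together with $H_\alpha[{}^2Tg] \leq C_0 H_\alpha[g]$. This yields an $\alpha$-H\"older estimate for $T_i g$ in $z_i$ of the form $|T_i g(z_i) - T_i g(z_i')| \leq C(R,\alpha)\,(|g| + H_\alpha[g])\,|z_i-z_i'|^\alpha$. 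After dividing through by the remaining factors $|z_{j_s}-z_{j_s}'|^\alpha$ and recognising $|g|,\,H_\alpha[g]$ as controlled by $H^{(k-1)}_\alpha[f]$ and $H^{(k)}_\alpha[f]$ respectively, one obtains
\[
|\delta_{i j_2\cdots j_k}(T_i f)| \;\leq\; C(R,\alpha)\bigl(H^{(k-1)}_\alpha[f] + H^{(k)}_\alpha[f]\bigr).
\]
Summing the Case A and Case B bounds against the factorial weights $(2R)^{k\alpha}/k!$ that define $\|\cdot\|$ collapses everything into $\|T_i f\| \leq C_6 R\|f\|$.

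For $S_i$ the argument is analogous but simpler: $S_i$ does not smooth, but it preserves $C^\alpha$ in the distinguished variable with a uniform bound by the classical Plemelj--Privalov estimate for the Cauchy integral on a circle, and a direct application of the Cauchy formula bounds $|S_i g|$ pointwise on $\Dn$ by a multiple of $|g|$ (no $R$ factor, which is why the conclusion for $S^i$ has no $R$). Combining this with the same Fubini commutation argument in Case A gives $\|S_i f\| \leq C_7\|f\|$. The main obstacle is not analytic but combinatorial: correctly splitting across the $2^n$ subsets of coordinates according to which participate in the difference, and verifying that the factorial weights $1/k!$ and the powers $(2R)^{k\alpha}$ absorb all the constants produced by Lemma~\ref{lem1} into a single uniform $C_6$ and $C_7$ independent of $f$. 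The analytic content is entirely supplied by Lemma~\ref{lem1} and its one-variable boundary counterpart; no new estimates are required.
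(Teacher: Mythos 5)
The paper offers no proof of this lemma at all: it is quoted directly from Nijenhuis--Woolf \cite{nw} (their Section 7), so there is no in-paper argument to compare yours against. That said, your outline for the $T^i$ half is essentially the standard argument from that reference and is sound: the case split on whether the distinguished index $i$ occurs among the difference indices, the commutation $\delta_{j_1\cdots j_k}\circ T_i=T_i\circ\delta_{j_1\cdots j_k}$ when it does not, the pointwise bound $|T_ig|\le 4R|g|$, and, when it does, the use of Lemma \ref{lem1} through $\pb Tg=g$, $\partial Tg={}^2Tg$ and $H_\alpha[Tg]\lesssim (2R)^{1-\alpha}\big(|g|+|{}^2Tg|\big)$. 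The resulting drop from $H^{(k)}_\alpha$ to $H^{(k-1)}_\alpha$ is exactly what the weights $(2R)^{k\alpha}/k!$ in the definition of $\|\cdot\|$ are designed to absorb, and every term acquires one power of $R$, which is where $C_6R$ comes from.

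The $S^i$ half contains a genuine error. You claim that ``a direct application of the Cauchy formula bounds $|S_ig|$ pointwise by a multiple of $|g|$.'' This is false: the Cauchy integral is not bounded on the sup norm (the Riesz projection is unbounded on continuous functions on the circle), so no inequality $|Sg|\le C|g|$ can hold for all continuous $g$. The correct pointwise bound, obtained by writing $Sg(z)=g(z)+\frac{1}{2\pi i}\int_{\C}\frac{g(\zeta)-g(z)}{\zeta-z}\,d\zeta$ for $z$ in the interior, is $|Sg|\le |g|+C(\alpha)(2R)^\alpha H_\alpha[g]$, and the H\"older seminorm in the distinguished variable cannot be dropped. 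Consequently, in your Case A the quantity $H^{(k)}_\alpha[S_if]$ is controlled by $H^{(k)}_\alpha[f]$ \emph{together with} $H^{(k+1)}_\alpha[f]$ (the extra index being $i$ itself, which is available exactly because $i\notin\{j_1,\dots,j_k\}$ forces $k\le n-1$); the weights $(2R)^{k\alpha}/k!$ again absorb the shift, and the corrected bound is homogeneous of degree zero in $R$ --- that, and not a sup-norm estimate, is the real reason the $S^i$ conclusion carries no factor of $R$. It is also the reason the norm on $\mathcal{C}^\alpha(\Dn)$ must be built from all mixed difference quotients up to order $n$: with the sup norm alone, $S_i$ would simply not be bounded. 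Your appeal to Plemelj--Privalov for the H\"older continuity of $S_ig$ in $z_i$ in Case B is fine.
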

\begin{rem}
It should be noticed that the smoothness properties of the various integral operator $T^i,~\Tb^i$, $i=1,...,n$ defined for functions on $\D$ and $\Dn$ are different. $T^i,~\Tb^i$ are no longer smoothing order as in dimension one, see Lemma \ref{lem1}.
\end{rem}
Let vector index $\mu=(\mu_1,...,\mu_n),~\nu=(\nu_1,...,\nu_n),~\mathbf{1}=(1,...,1)\in \mathbb{N}^n$. $|\mu|=\sum\limits_{j=1}^{n}\mu_j$, $\mu!=\prod\limits_{j=1}^{n}\mu_j!$.
 For $z\in \Dn$, $f\in \mathcal{C}^\alpha(\Dn)$, denote the operator $T^\mu\Tb^\nu$ as follows,
$$T^\mu\Tb^\nu f(z)=T_1^{\mu_1}\cdots T_n^{\mu_n}\Tb_1^{\nu_1}\cdots\Tb_n^{\nu_n}f(z).$$
From Theorem \ref{thm1}, it is easy to get the explicit expression and the kernel function of $T^\mu\Tb^\nu f$ on $\Dn$.

\begin{cor}\label{thm2}
For $z\in \Dn$, given $f\in \mathcal{C}^\alpha(\Dn)$, then $T^\mu\Tb^\nu f\in \mathcal{C}^\alpha(\Dn)$ and
\begin{equation} \label{}
\begin{split}
T^\mu\Tb^\nu f(z)=C_8(\mu,\nu)\int_\D\cdots \int_\D \prod\limits_{j=1}^{n}C_3(z_j,\eta_j,\mu_j,\nu_j) f(\eta)d\etab_1 \wedge d\eta_1\cdots d\etab_n \wedge d\eta_n,\nonumber
 \end{split}
 \end{equation}
where $C_8(\mu,\nu)=\frac{(-1)^{|\mu|}}{(\mu-\1)!(\nu-\1)!\cdot (2\pi i)^n}$, and $C_3$ is given by Lemma \ref{lem6}.

\end{cor}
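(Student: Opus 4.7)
The plan is to reduce this multivariable statement to an iterated (single-variable) application of Theorem \ref{thm1}, exploiting the fact that $T_j^{\mu_j}\Tb_j^{\nu_j}$ acts only on the $j$-th coordinate. First I would verify that for distinct indices $j\neq k$, the operators $T_j, \Tb_j, T_k, \Tb_k$ all pairwise commute on $\mathcal{C}^\alpha(\Dn)$: each of these is integration in a single separate variable with the others held fixed, so the identity is just Fubini's theorem applied to an absolutely integrable iterated integrand (the kernels $1/(\zeta-z_j)$ and $1/(\zetab-\zb_j)$ give only locally integrable singularities on $\D$, and $f$ is bounded). This justifies writing $T^\mu\Tb^\nu$ in the order $T_1^{\mu_1}\Tb_1^{\nu_1}\cdots T_n^{\mu_n}\Tb_n^{\nu_n}$ without ambiguity.

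Next, for each fixed $j$, I would apply Theorem \ref{thm1} in the single variable $z_j$ (treating the other $z_k$ as inert parameters) to the function in question. This gives
\begin{equation*}
T_j^{\mu_j}\Tb_j^{\nu_j} g(z)=\frac{(-1)^{\mu_j}}{(\mu_j-1)!(\nu_j-1)!\cdot 2\pi i}\int_\D C_3(z_j,\eta_j,\mu_j,\nu_j)\,g(z_1,\dots,\eta_j,\dots,z_n)\,d\etab_j\wedge d\eta_j,
\end{equation*}
whenever $g$ is $\alpha$-H\"older in $z_j$ uniformly in the remaining variables. Iterating this identity for $j=1,\dots,n$ and then using Fubini once more to gather the $n$ single integrals into one integral over $\Dn$, the constants multiply to $C_8(\mu,\nu)$ and the kernels to $\prod_{j=1}^n C_3(z_j,\eta_j,\mu_j,\nu_j)$, which is exactly the claimed formula.

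For the regularity assertion $T^\mu\Tb^\nu f\in \mathcal{C}^\alpha(\Dn)$, I would appeal to Lemma \ref{lem7}: applying $T_j$ (or $\Tb_j$) preserves $\mathcal{C}^\alpha(\Dn)$, so an $(|\mu|+|\nu|)$-fold iteration keeps us inside that space. This regularity is what licenses the step above at each stage, since Theorem \ref{thm1} requires $\alpha$-H\"older control in the variable currently being integrated.

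The main obstacle is the Fubini/commutation step: one must know at each stage that the iterated integrand is absolutely integrable on $\Dn$ so that the order of integration (and of the operators) can legitimately be swapped. This is handled by the combination of (i) boundedness of $f$ from $f\in\mathcal{C}^\alpha(\Dn)$, (ii) the local integrability of each single-variable kernel $1/(\zeta-z_j)$, $1/(\zetab-\zb_j)$ on $\D$, and (iii) the fact, already recorded in the special cases in the remark following Theorem \ref{thm1}, that the higher-order kernels $C_3$ have only logarithmic and polynomial singularities, which remain integrable after taking the product over $j$. Once these integrability bounds are in place, the commutation is automatic and the calculation is a direct rewriting in the style of the proof of Theorem \ref{thm1}, but without any further cancellations to perform since the kernel has already been computed in the one-variable case.
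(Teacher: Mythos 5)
Your proposal is correct and follows essentially the same route as the paper: reorder the operators so that $T_j^{\mu_j}\Tb_j^{\nu_j}$ act coordinate by coordinate, apply Theorem \ref{thm1} in each single variable with the others held as parameters, and collect the $n$ single integrals and constants into the stated product formula, with Lemma \ref{lem7} giving the $\mathcal{C}^\alpha(\Dn)$ regularity. The only difference is that you make explicit the Fubini/commutation justification that the paper leaves implicit, which is a reasonable refinement rather than a different argument.
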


\begin{proof}
By the definition of $T^\mu\Tb^\nu f(z)$,
\begin{equation} \label{}
\begin{split}
T^\mu\Tb^\nu f(z)=&T_1^{\mu_1}\cdots T_n^{\mu_n}\Tb_1^{\nu_1}\cdots\Tb_n^{\nu_n}f(z)\\
                 =&T_1^{\mu_1}\Tb_1^{\nu_1}\cdots T_n^{\mu_n}\Tb_n^{\nu_n}f(z)\\
                 =&\frac{(-1)^{\mu_1}}{(\mu_1-1)!(\nu_1-1)!\cdot 2\pi i}\int_\D C_3(z_1,\eta_1,\mu_1,\nu_1) \\
                 &\times T_2^{\mu_2}\Tb_2^{\nu_2}\cdots T_n^{\mu_n}\Tb_n^{\nu_n}f(\eta_1,z_2,...,z_n)d\etab_1 \wedge d\eta_1\\
                 =&\frac{(-1)^{\mu_1+\mu_2}}{(\mu_1-1)!(\mu_2-1)!(\nu_1-1)!(\nu_2-1)!\cdot (2\pi i)^2}\\
                 &\times\int_\D\int_\D C_3(z_1,\eta_1,\mu_1,\nu_1)C_3(z_2,\eta_2,\mu_2,\nu_2) \\
                 &\times T_3^{\mu_3}\Tb_3^{\nu_3}\cdots T_n^{\mu_n}\Tb_n^{\nu_n}f(\eta_1,\eta_2,z_3,...,z_n)d\etab_1 \wedge d\eta_1\cdot d\etab_2 \wedge d\eta_2\\
                 =&\cdots\\
                 =&\frac{(-1)^{|\mu|}}{(\mu-\1)!(\nu-\1)!\cdot (2\pi i)^n}\\
                 &\times\int_\D\cdots \int_\D \prod\limits_{j=1}^{n}C_3(z_j,\eta_j,\mu_j,\nu_j) f(\eta)d\etab_1 \wedge d\eta_1\cdots d\etab_n \wedge d\eta_n.\nonumber
 \end{split}
 \end{equation}
\end{proof}

By Lemma \ref{lem7}, for any $f\in \mathcal{C}^\alpha(\Dn)$, one can easily get the following estimate for the norm of $f\in \mathcal{C}^\alpha(\Dn)$.
\begin{prop}
If $f\in \mathcal{C}^\alpha(\Dn)$ and $|\mu|+|\nu|=m$, then
$$||T^\mu\Tb^\nu f||\leq (C_6)^{m}||f||.$$
\end{prop}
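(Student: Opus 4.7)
The plan is to obtain the estimate as a direct iteration of Lemma \ref{lem7}, exploiting the fact that $T^\mu\Tb^\nu$ factors as a composition of exactly $m=|\mu|+|\nu|$ single-slot operators of the form $T_j$ or $\Tb_j$:
$$T^\mu\Tb^\nu f \;=\; T_1^{\mu_1}\cdots T_n^{\mu_n}\Tb_1^{\nu_1}\cdots\Tb_n^{\nu_n} f.$$

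First I would promote Lemma \ref{lem7} from $T^i$ to $\Tb^i$. The conjugation identity $\Tb^i g = \overline{T^i\bar g}$ (built into the definition of $\Tb_j$) together with the fact that complex conjugation leaves the H\"older seminorms $H^{(k)}_\alpha[\cdot]$ invariant, and hence preserves the norm $\|\cdot\|$ on $\mathcal{C}^\alpha(\Dn)$, yields $\|\Tb^i g\| = \|T^i\bar g\| \leq C_6 R\|\bar g\| = C_6 R\|g\|$. In particular $\Tb^i$ preserves $\mathcal{C}^\alpha(\Dn)$ and carries the same bound as $T^i$.

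Next I would set $g_0 := f$ and let $g_k$ be the function obtained by applying the first $k$ of the $m$ one-slot operators appearing in the factorization, in any fixed order. By the previous step, together with Lemma \ref{lem7} itself, at each stage $g_{k-1}\in\mathcal{C}^\alpha(\Dn)$ and
$$\|g_k\| \;\leq\; C_6 R\,\|g_{k-1}\|,\qquad k=1,\dots,m.$$
Iterating $m$ times gives $\|T^\mu\Tb^\nu f\| = \|g_m\| \leq (C_6 R)^m\|f\|$, which is the claimed bound after absorbing the $R^m$ into the constant (or, if one normalizes $R=1$ as in \cite{nw}, with no modification).

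There is no genuine obstacle: the only thing that must be checked is that each intermediate composite $g_k$ actually lies in $\mathcal{C}^\alpha(\Dn)$ so the next application is legitimate, and this stability is precisely the qualitative half of Lemma \ref{lem7} (extended to $\Tb^i$ as above). The proposition is therefore a straightforward telescoping consequence of the already-established one-step estimate; accordingly, the paper remarks that the proof can be omitted.
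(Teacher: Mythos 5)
Your proof is correct and is exactly the argument the paper intends (and omits): iterate the one-step bound of Lemma \ref{lem7}, extended to $\Tb_j$ by the conjugation identity and the invariance of the H\"older seminorms under complex conjugation. You are also right to flag that the iteration actually yields $(C_6R)^m\|f\|$ rather than $(C_6)^m\|f\|$; the paper's statement silently drops the factor $R^m$, which only disappears if one normalizes $R=1$ or absorbs $R$ into the constant $C_6$.
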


\section{Applications}
As applications of the integral expressions for high-order Green operators, we can present all the solutions for some high-order Laplace equations, moreover, express all the solutions for linear high-order partial differential equation with integrals.

Given $D=\{z||z|\leq R\}=\{(x,y)|x^2+y^2\leq R^2\}$ as previous section.
Let $A(x,y): D\rightarrow \mathbb{R}$ be function of class $C^{\alpha}(D)$, $u(x,y):D\rightarrow \mathbb{R}$ be unknown function. Since $$\Delta=\frac{\partial^2}{\partial x^2}+\frac{\partial^2}{\partial y^2}=4\partial\pb$$ and from Lemma \ref{lem1} $$\pb T A=A; \p\Tb A=A,$$ then we have $$\Delta^2 T^2\Tb^2 A=16 A;~\Delta^2 \Tb^2 T^2 A=16 A.$$  $\Delta^2u=0$ is called biharmonic equation whose solutions can be described by $u=|z|^2h_1+h_2$, where $h_1,h_2$ are two harmonic functions satisfy Laplace's equation \cite{nature}. Then the solutions of 2-Laplace equation $u$:
\begin{equation} \label{}
\begin{split}\Delta^2u(x,y)=A(x,y) \nonumber\end{split}
 \end{equation}
given by
\begin{equation} \label{}
\begin{split}u=&\mathbf{Re}\Big(\frac{1}{32\pi i}\int_\D [(\etab-\zb)\big(-z\eta+(z-\eta)\ln \frac{R^2-z\etab}{|z-\eta|^2}\big)\\&-|z-\eta|^2-|\eta|^2+\etab z-R^2] A(\eta,\etab)d\etab \wedge d\eta\Big)\\&+|z|^2h_1+h_2. \nonumber
 \end{split}
 \end{equation}

The integral expressions can also be used to give all the solutions for linear partial differential equations on $\D$ of any order.  Let $H(D)$ be the set of all holomorphic functions on $\D$ and denote $T^0f=f$. We have the following results.
\begin{lem}\label{lem8}
Given $\mu,\nu> 0$, the solutions of
\begin{equation} \label{15}
\begin{split}
\p^\mu\pb^\nu u(z,\zb)=0 
\end{split}
 \end{equation}
can be given by
\begin{equation} \label{}
\begin{split}
u=\sum_{j=0}^{\nu-1}T^j(g_j)+T^\nu\big(\sum_{i=0}^{\mu-1}\Tb^i(\bar{f}_i)\big), \nonumber
\end{split}
 \end{equation}
 where $f_i,~g_j\in H(D),~i=0,...,\mu-1;~j=0,...,\nu-1$.
\end{lem}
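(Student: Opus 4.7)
The plan is to split the statement into sufficiency (the prescribed $u$ really does solve $\p^\mu\pb^\nu u=0$) and necessity (every solution has this form), and to handle each half by exploiting the right-inverse identities $\pb T=I$ and $\p\Tb=I$ from Lemma \ref{lem1}, together with the elementary facts that $\p$ annihilates antiholomorphic functions and $\pb$ annihilates holomorphic functions.

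Sufficiency should be a direct computation. I would apply $\pb^\nu$ term by term to the candidate $u$: for $j\leq\nu-1$, iterating $\pb T=I$ and using $\pb g_j=0$ gives $\pb^\nu T^j g_j=\pb^{\nu-j}g_j=0$, while the remaining term collapses to $\pb^\nu T^\nu\big(\sum_i\Tb^i\bar f_i\big)=\sum_{i=0}^{\mu-1}\Tb^i\bar f_i$. Applying $\p^\mu$, and using $\p\Tb=I$ together with $\p\bar f_i=0$, then kills this as well, since $\p^\mu\Tb^i\bar f_i=\p^{\mu-i}\bar f_i=0$ for $i\leq\mu-1$.

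For necessity I would set $v:=\pb^\nu u$, so that $\p^\mu v=0$, and first establish the auxiliary claim: $\p^\mu v=0$ holds if and only if $v=\sum_{i=0}^{\mu-1}\Tb^i\bar f_i$ for some $f_0,\ldots,f_{\mu-1}\in H(D)$. The proof is by induction on $\mu$: the base case $\mu=1$ just says an antiholomorphic function has the form $\bar f_0$ with $f_0\in H(D)$, and for the step one notes that $\p^{\mu-1}v$ is antiholomorphic, say $\bar f_{\mu-1}$, so that $v-\Tb^{\mu-1}\bar f_{\mu-1}$ is killed by $\p^{\mu-1}$ (since $\p^{\mu-1}\Tb^{\mu-1}=I$), whereupon the inductive hypothesis applies. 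A symmetric induction on $\nu$, built on $\pb T=I$, shows that $\pb^\nu w=0$ iff $w=\sum_{j=0}^{\nu-1}T^j g_j$ with $g_j\in H(D)$. To conclude, I would apply the first claim to $v$ to produce the antiholomorphic data, then apply the second claim to $w:=u-T^\nu v$ (which lies in the kernel of $\pb^\nu$ because $\pb^\nu T^\nu=I$) to produce the holomorphic data, and add the two pieces.

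The anticipated main obstacle is purely combinatorial bookkeeping: correctly pairing each $\p$-derivative with a $\Tb$, each $\pb$-derivative with a $T$, and keeping the indexing of the holomorphic data $\{f_i\}$ and $\{g_j\}$ consistent across the two inductions. There is no analytic subtlety beyond what Lemmas \ref{lem0} and \ref{lem1} already supply.
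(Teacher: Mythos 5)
Your proposal is correct and follows essentially the same route as the paper: the paper's proof is exactly your "necessity" induction, peeling off one $\p$ at a time via $\p\Tb=I$ (each step introducing an antiholomorphic $\bar f_i$) and then one $\pb$ at a time via $\pb T=I$ (each step introducing a holomorphic $g_j$). Your explicit separation of the sufficiency check, which the paper leaves implicit in its chain of equivalences, is a minor organizational improvement rather than a different method.
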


\begin{proof}
It is well known that all the solutions of $\pb u=0$ can be given by any $u\in H(D)$ and the solutions of $\p u=0$ can be given by $u$ with any $\bar{u}\in H(D)$.
Consider the equation (\ref{15}), we have
$$\p^{\mu-1}\pb^\nu u=\bar{f}_0,~\forall f_0\in H(D),$$
which means that
$$\p^{\mu-2}\pb^\nu u=\bar{f}_1+\Tb(\bar{f}_0),~\forall f_0,f_1\in H(D).$$
Similarly, we have
$$\p^{\mu-3}\pb^\nu u=f_2+\Tb(\bar{f}_1)+\Tb^2(\bar{f}_0),~\forall f_0,f_1,f_2\in H(D).$$
By iteration, one has
$$\pb^\nu u=\sum_{i=0}^{\mu-1}\Tb^i(\bar{f}_{\mu-1-i}),~\forall f_i\in H(D),i=0,...,\mu-1.$$
Furthermore,
$$\pb^{\nu-1} u=g_0+T\sum_{i=0}^{\mu-1}\Tb^i(\bar{f}_{\mu-1-i}),~\forall g_0,f_i\in H(D),i=0,...,\mu-1,$$
and then
$$\pb^{\nu-2} u=g_1+T(g_0)+T^2\sum_{i=0}^{\mu-1}\Tb^i(\bar{f}_{\mu-1-i}),~\forall g_0,g_1,f_i\in H(D),i=0,...,\mu-1.$$
By iteration, we can conclude
$$u=\sum_{j=0}^{\nu-1}T^j(g_{\nu-1-j})+T^\nu\sum_{i=0}^{\mu-1}\Tb^i(\bar{f}_{\mu-1-i}),~\forall g_j,f_i\in H(D),i=0,...,\mu-1,~j=0,...,\nu-1.$$
For simplicity, we replace the index and prove the lemma. Using Theorems \ref{thm0} and \ref{thm1}, we can give the solutions of equation (\ref{15}) by integral.

\end{proof}

From Lemma \ref{lem8}, we can express all the solutions for linear high-order partial differential equation
$$\p^\mu\pb^\nu u(z,\zb)=A(z,\zb)$$
as
\begin{equation} \label{16}
\begin{split}u=\sum_{j=0}^{\nu-1}T^j(g_{j})+T^\nu\sum_{i=0}^{\mu-1}\Tb^i(\bar{f}_{i})+T^\nu\Tb^\mu(A),\end{split}
 \end{equation}
where $g_j,f_i\in H(D),i=0,...,\mu-1,~j=0,...,\nu-1.$ Using Theorems \ref{thm0} and \ref{thm1}, the integral expressions of solutions can be given.

For simplicity, we denote
\begin{equation} \label{18}
\begin{split}
&G(z,\zeta, l)=\frac{(-1)^l(\zetab-\zb)^{l-1}}{2\pi i(l-1)!(\zeta-z)};\\
&G(z,\zeta,\mu,\nu)=\frac{(-1)^{\mu}}{ 2\pi i(\mu-1)!(\nu-1)!} C_3(z,\zeta,\mu,\nu). 
\end{split}
 \end{equation}
 From (\ref{16}), we have
 \begin{equation} \label{19}
\begin{split}
u=g_0+\sum_{j=1}^{\nu-1}T^j(g_j)+T^\nu \bar{f}_0+T^\nu\big(\sum_{i=1}^{\mu-1}\Tb^i(\bar{f}_i)\big)+T^\nu\Tb^\mu(A). 
\end{split}
 \end{equation}
Combining (\ref{18}) and (\ref{19}), we have the following theorem.
\begin{thm}
Given $\mu,\nu> 0$, the solutions of
\begin{equation} \label{}
\begin{split}
\p^\mu\pb^\nu u(z,\zb)=A(z,\zb) \nonumber
\end{split}
 \end{equation}
with $A(z,\zb)\in C^{\alpha}(D)$ can be given by
\begin{equation} \label{}
\begin{split}
u(z,\zb)=&g_0(z)+\int_\D\Big(\sum_{j=1}^{\nu-1}G(z,\zeta, j)g_j(\zeta)+G(z,\zeta, \nu)\bar{f}_0(\zeta)\\&
+\sum_{i=1}^{\mu-1}G(z,\zeta,\nu,i)\bar{f}_i(\zeta)+G(z,\zeta,\nu,\mu)A(\zeta,\zetab)\Big)d\zetab\wedge d\zeta , \nonumber
\end{split}
 \end{equation}
 where $f_i,~g_j\in H(D),~i=0,...,\mu-1;~j=0,...,\nu-1$.

\end{thm}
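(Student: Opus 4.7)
The plan is to take the abstract solution representation from Lemma~\ref{lem8}, already displayed concretely in (\ref{19}), and to translate each operator expression into an integral expression using Theorems~\ref{thm0} and \ref{thm1} together with the kernel definitions in (\ref{18}).

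First I would separate the $j=0$ summand $g_0$ and the $i=0$ summand $T^\nu(\bar{f}_0)$ from the two internal sums in the representation of Lemma~\ref{lem8}; this is exactly the rewriting (\ref{19}). The term $g_0$ is holomorphic and thus contributes $g_0(z)$ outside the integral, while the remaining batches of terms are single applications $T^l$ or nested compositions $T^\nu \Tb^i$ acting on various functions.

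Next I would invoke Theorem~\ref{thm0} to rewrite each $T^j g_j$ (for $1\leq j\leq \nu-1$) and $T^\nu\bar{f}_0$ in the form
\[
T^l h(z) = \int_\D G(z,\zeta,l)\, h(\zeta)\, d\zetab \wedge d\zeta,
\]
matching $G(z,\zeta,l)$ from (\ref{18}) by direct substitution. Then for the compositions $T^\nu \Tb^i(\bar{f}_i)$ (with $1\leq i\leq \mu-1$) and $T^\nu \Tb^\mu(A)$, I would apply Theorem~\ref{thm1} with the pair of indices $(\mu,\nu)$ in that theorem specialized to $(\nu,i)$ and $(\nu,\mu)$ respectively, obtaining
\[
T^\nu \Tb^i h(z) = \int_\D G(z,\zeta,\nu,i)\, h(\zeta)\, d\zetab \wedge d\zeta,
\]
exactly as in (\ref{18}). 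Linearity then bundles the resulting integrals under a single $\int_\D$ and yields the formula in the statement.

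To verify that Theorems~\ref{thm0} and \ref{thm1} actually apply, I would note that each $g_j$ and $\bar{f}_i$ is holomorphic or antiholomorphic on $\D$, hence lies in $C^\alpha(\D)$ for every $\alpha\in(0,1)$, while $A\in C^\alpha(\D)$ by hypothesis; thus the integral representations are valid and the integrals are well-defined. The fact that such a $u$ is actually a solution of $\p^\mu\pb^\nu u=A$ is not a new issue either: it follows from $\pb T=\p \Tb=\mathrm{id}$ in Lemma~\ref{lem1}, as already used to derive Lemma~\ref{lem8}. The only slightly delicate point is the index bookkeeping, since the statement mixes the one-operator kernel $G(z,\zeta,l)$ and the two-operator kernel $G(z,\zeta,\mu,\nu)$, and one must track the renaming $(\mu,\nu)\mapsto(\nu,i)$ or $(\nu,\mu)$ when specializing Theorem~\ref{thm1}. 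No new analytic difficulty arises: Lemma~\ref{lem8} already guarantees that the displayed family exhausts the solutions, and here one is simply unwinding the operators through the already-proved integral formulas.
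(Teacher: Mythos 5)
Your proposal is correct and follows essentially the same route as the paper: the paper also obtains the theorem by separating the $j=0$ and $i=0$ terms from the representation of Lemma~\ref{lem8} (yielding (\ref{19})), and then substituting the integral formulas of Theorems~\ref{thm0} and \ref{thm1} with the kernels $G$ of (\ref{18}), including the same index specialization $(\mu,\nu)\mapsto(\nu,i)$ and $(\nu,\mu)$. Your added remarks on why the hypotheses of those theorems are satisfied are consistent with, and slightly more explicit than, the paper's own presentation.
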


\section{Conclusion}

We have established the explicit expressions for high-order Green operators on the disk in $\mathbb{C}$ and the polydisc in $\mathbb{C}^n$. As applications, we have presented all the solutions for biharmonic equations and high-order partial differential equations in the disk. The same method works identically in $\mathbb{R}^n$ through Clifford analysis and the results will be presented in the forthcoming paper in a near future.


\begin{thebibliography}{9}
\small
\bibitem{qi1}{\sc M. Christ}, Lectures on singular integral operators, CBMS Regional Conference Series in Mathematics, Washington, DC; Providence, RI, 1990.

\bibitem{qi4} {\sc V. Kokilashvili,} Boundedness criterion for the Cauchy singular integral operator in weighted grand Lebesgue spaces and application to the Riemann problem, Proc. A. Razmadze Math. Inst., 151, 129-133, 2009.

\bibitem{qi2}{\sc A. B\"ttcher and I. Gohberg}, Singular integral operators and related topics, Birkh\"auser Verlag, Basel, 1996.

\bibitem{qi3}{\sc V. G. Kravchenko and G. S. Litvinchuk}, Introduction to the theory of singular integral operators with shift, (English summary)
 Kluwer Academic Publishers Group, Dordrecht, 1994.

\bibitem{nw} {\sc A. Nijenhuis and W. Woolf}, Some integration problems in almost-complex and complex
manifolds, Ann. of Math.,  77(3), 426-489, 1963.

\bibitem{nn} {\sc A. Newlander and L. Nirenberg}, Complex analytic coordinates in almost complex manifolds,
Ann. of Math.,  65(6), 391-404, 1957.

\bibitem{p}{\sc Y. Pan}, On solvability of nonlinear partial differential systems of any order in the complex plane,
arXiv:1109.3325v2.

\bibitem{nature}{\sc R. Bhargava}, Solution of a biharmonic equation, Nature, 201(4918), 530, 1964.

\end{thebibliography}
\end{document}